\theoremstyle{plain}
\newtheorem{thm}{Theorem}[section]
\newtheorem{cor}[thm]{Corollary}%[section]
\newtheorem{lemma}{Lemma}[section]
\newtheorem{proposition}[thm]{Proposition}
\theoremstyle{remark}
\newtheorem{remark}[thm]{Remark}
\theoremstyle{definition}
\newtheorem*{exx}{Example}%[section]
\numberwithin{equation}{section}
\def\qed{{\unskip\nobreak\hfil\penalty50\hskip1em\nobreak\hfil {\vrule
height7ptwidth7ptdepth0pt}%solid black tombstone
%\obox%unfilled tombstone
\parfillskip=0pt\finalhyphendemerits=0\par}\vskip3mm}
\newcommand{\la}{\langle}
\newcommand{\ra}{\rangle}
 \newcommand{\al}{\alpha}
  \newcommand{\be}{\beta}
  \newcommand{\vep}{\varepsilon}
\def\cA{\hbox{$\mathcal A$}}
\def\bs{\hbox{$\mathbf s$}}
\def\bt{\hbox{$\mathbf t$}}
\def\bC{\hbox{$\mathbb C$}}
\def\bN{\hbox{$\mathbb N$}}
\def\bZ{\hbox{$\mathbb{Z}$}}
\def\bR{\hbox{$\mathbb R$}}
\def\ds{\hbox{${}^{**}$}}
\def\on{\hbox{${}\|_\omega$}}
\def\mon{\hbox{${}\|_{\sup,1/\omega}$}}
\def\L1o{\hbox{$L^1(\omega)$}}
\def\Mo{\hbox{$M(\omega)$}}
\def\ho{\hbox{$\widehat\omega$}}
 \def\Ao{\hbox{$\cA_{\omega}$}}
 \def\C0o{\hbox{$C_0(1/\omega)$}}
 \def\Lio{\hbox{$L^\infty(1/\omega)$}}
\def\dt{\hbox{$\delta_t$}}
\def\dtn{\hbox{$\delta_{t^n}$}}
\def\dtmn{\hbox{$\delta_{t^{-n}}$}}
\def\dtm{\hbox{$\delta_{t^{-1}}$}}
 \def\RUCo{\hbox{$RUC(1/\omega)$}}
\begin{document}

\title[Weak amenability% {\date}
]{Weak amenability of commutative Beurling algebras} %%%%%%%%%%%%%%%%%%%%%%%%

\author{Yong Zhang}

\address{ Department of Mathematics\\ 
University of Manitoba\\ \mbox{Winnipeg R3T 2N2}\\ \indent Canada}
\email{zhangy@cc.umanitoba.ca}%\hfill \break
%\indent http://www.umanitoba.ca/faculties/science/mathematics/\break\indent\hskip2cm
%new/faculty/html/zhang.html}

\thanks{Supported by NSERC 238949-2011. }
\subjclass{Primary 46H20, 43A20; Secondary 43A10}
\keywords{derivation, weak amenability, $2$-weak amenability, weight, locally compact Abelian group}

\begin{abstract}
For a locally compact Abelian group $G$ and a continuous weight function $\omega$ on $G$ we show that the Beurling algebra $L^1(G, \omega)$ is weakly amenable if and only if there is no nontrivial continuous group homomorphism $\phi$: $G\to \bC$ such that $\sup_{t\in G}\frac{|\phi(t)|}{\omega(t)\omega(t^{-1})} < \infty$. Let $\widehat\omega(t) = \limsup_{s\to \infty}\omega(ts)/\omega(s)$ ($t\in G$). Then $L^1(G, \omega)$ is $2$-weakly amenable if there is a constant $m> 0$ such that $\liminf_{n\to \infty}\frac{\omega(t^n)\widehat\omega(t^{-n})}{n} \leq m$ for all $t\in G$.
\end{abstract}

\def\timestring{\begingroup
\count0=\time \divide\count0 by 60
\count2 = \count0
\count4 = \time \multiply\count0 by 60
\advance\count4 by -\count0
\ifnum\count4 <10 \toks1={0}%
\else \toks1 = {}%
\fi
\ifnum\count2<12 \toks0 = {a.m.}%
\else \toks0 = {p.m.}%
\advance\count2 by -12
\fi
\ifnum\count2=0 \count2 = 12
\fi
\number\count2:\the\toks1 \number\count4
\thinspace \the\toks0
\endgroup}

\def\date{ \timestring \enskip {\ifcase\month\or January\or February\or
March\or April\or May\or June\or July\or August\or September\or October\or
November\or December\fi}
\number\day}

\maketitle
%\begin{center}
%\today
%\end{center}
\section{Introduction}
Let $G$ be a locally compact group. The integral of a function $f$ on a measurable subset $K$ of $G$ against a fixed left Haar measure will be denoted by $\int_K{f}dx$.  A \emph{weight} on $G$ is a positive valued continuous function $\omega$ on $G$ that satisfies $\omega(st) \leq \omega(s)\omega(t)$ for all $s,t\in G$. let $L^1(G)$ and $M(G)$ be, respectively, the usual convolution group algebra and measure algebra of $G$. Consider 
\[ L^1(G, \omega) =\{f:\, f\omega \in L^1(G)\},
\]
 where $f\omega$ denotes the pointwise product of $f$ and $\omega$. In our discussion, most of time $G$ is fixed. So we will normally write $\L1o$ for $L^1(G,\omega)$. Equipped with the norm 
 \[\|f\|_\omega = \int_G{|f(t)|\omega(t)}dt \quad (f\in \L1o) \]
and with the convolution product, $\L1o$ is a Banach algebra. When $\omega \equiv 1$ this is just the usual group algebra $L^1(G)$. 

Let $\cA$ be a Banach algebra and let $X$ be a Banach $\cA$-bimodule. A \emph{derivation} $D$: $\cA\to X$  is a linear mapping from $\cA$ into $X$ that satisfies $D(ab) = aD(b) + D(a)b$ ($a,b\in \cA$). For each $x\in X$ the mapping $a\mapsto ax-xa$ ($a\in \cA$) is a continuous derivation, called an \emph{inner derivation}. 
The Banach algebra $\cA$ is called \emph{amenable} if each continuous derivation from $\cA$ into the dual module $X^*$ is inner for every Banach $\cA$-bimodule $X$. 
The Banach algebra $\cA$ is called \emph{weakly amenable} if every continuous derivation from $\cA$ into $\cA^*$ is inner, and $\cA$ is \emph{$n$-weakly amenable} for an integer $n>0$ if every continuous derivation from $\cA$ into $\cA^{(n)}$, the $n$-th dual of $\cA$, is inner. If $\cA$ is $n$-weakly amenable for each $n>0$ then it is called \emph{permanently weakly amenable}. We refer to the monograph \cite{DAL} for the background and history of these notions.

 It is well-known that the group algebra $L^1(G)$ is always weakly amenable \cite{JOH_weak}. %and it is amenable if and only if $G$ is an amenable group \cite{JOH}. 
In fact, $L^1(G)$ is permanently weakly amenable for any locally compact group $G$ \cite{C-G-Z}.
 %It is also worth mentioning here that, for each $n>0$, if $\cA$ is $(n+2)$-weakly amenable then it is $n$-weakly amenable \cite{D-G-G}. An example of weakly (that is 1-weakly) amenable Banach algebra that is not 3-weakly amenable is given by the author in \cite{Zhang-trans}. It is still open whether there is a $2$-weakly amenable Banach algebra that is not 4-weakly amenable.

For Beurling algebras, N. Gr\o nb\ae k \cite{GRO} showed that $L^1(G,\omega)$ is amenable if and only if $G$ is amenable and the function $\omega(t)\omega(t^{-1})$ is bounded on $G$. Weak amenability of $\L1o$ was first studied by W.G. Bade, P.C. Curtis ans H.G. Dales in \cite{B-C-D}, where they showed for the additive group $\bZ$ of all integers and for the weight $\omega_\al(x) = (1+|x|)^\al$ on $\bZ$, $L^1(\bZ, \omega_\al)$  is weakly amenable if and only if $0\leq \al < \frac{1}{2}$. Following this work, Gr\o nb\ae k showed in \cite{GRO-weak} that $L^1(\bZ, \omega)$ is weakly amenable if and only if $\liminf_{n\to \infty}\frac{\omega(n)\omega(-n)}{n} = 0$. Recently E. Samei \cite{SAM} (also see \cite{G-Zab}) showed that for a commutative group $G$, if $\liminf \frac{\omega(t^n)\omega(t^{-n})}{n} = 0$ for all $t\in G$ then $\L1o$ is weakly amenable. For $2$-weak amenability H.G. Dales and A. T.-M. Lau showed in \cite{D-L} that $L^1(\bZ, \omega_\al)$ is $2$-weakly amenable if and only if $0\leq \al < 1$ and that the same is also true for $L^1(\bR, \omega_\al)$. They conjectured that for an Abelian group $G$, %and a weight $\omega \geq 1$ on $G$,
 $\L1o$ is $2$-weakly amenable whenever  $\liminf_{n\to \infty}\frac{\omega(t^n)}{n} = 0$ for all $t\in G$, after showing that this is true if $\omega$ is almost invariant in the sense that $\lim_{t\to \infty}\sup_{s\in K}|\frac{\omega(st)}{\omega(t)}-1|=0$ for each compact set $K\subset G$. The last result was improved in %\cite{G-Zab} (see 
\cite{SAM}%for a rigid proof)
, where the almost invariance condition was replaced by the weaker condition that the function $\widehat\omega$ defined by $\widehat\omega(s) = \limsup_{t\to \infty}\frac{\omega(ts)}{\omega(t)}$ is bounded on $G$. Related to 2-weak amenability of $\L1o$, we note that if $G$ is Abelian, $\L1o$ is semisimple \cite{BhDe} and so, by the Singer-Wermer Theorem \cite[2.7.20]{DAL}, zero is the only continuous derivation on $\L1o$.

In this paper we study weak amenability and $2$-weak amenability for commutative Beurling algebras.

 In Section~\ref{sect WA} we show that a commutative Beurling algebra $\L1o$ is weakly amenable if and only if there is no nontrivial continuous group homomorphism $\phi$: $G\to \bC$ (note that such homomorphisms are called characters in \cite[24.33]{HAR}) such that $\sup_{t\in G}\frac{|\phi(t)|}{\omega(t)\omega(t^{-1})} < \infty$. With this characterization we may easily derive some well-known results obtained in \cite {B-C-D, GRO, SAM} on the weak amenability of commutative Beurling algebras. We will also study special cases in the section. For example, we will explore the weak amenability of $L^1(G,\omega)$ when $G$ is the additive group of the real line $\bR$, when $G$ is the product group of two or several factors and when $\L1o$ is the tensor product of two Beurling algebras.

In Section~\ref{sect 2-WA} we show that $\L1o$ is $2$-weakly amenable if there is a constant $m> 0$ such that $\liminf_{n\to \infty}\frac{\omega(t^n)\widehat\omega(t^{-n})}{n} \leq m$ for all $t\in G$. This result covers several known results on the $2$-weak amenability of commutative Beurling algebras. We will also give an example of a $2$-weakly amenable $\L1o$ for which $\widehat\omega$ is unbounded.

In Section~\ref{Quest} we will discuss some open problems on weak amenability for Beurling algebras.

\section{Preliminaries} 
Given a Banach space $X$, its dual space will be denoted by $X^*$. The action of $f\in X^*$ at $x\in X$ will be denoted either by $f(x)$ or by $\la x, f\ra$.

Let $\cA$ be a Banach algebra and let $X$ be a Banach $\cA$-bimodule. The module action of $\cA$ on $X$ will be denoted by ``$\cdot$''. But if no confusion may occur, we will simply write $ax$ or $xa$ instead of $a\cdot x$ or $x\cdot a$  ($a\in \cA$, $x\in X$). As well known,  the dual space $X^*$ of $X$ is a Banach $\cA$-bimodule with the natural module actions defined by 
\[ \la x, a\cdot f\ra = \la xa,f\ra, \quad \la x, f\cdot a\ra = \la ax,f\ra \]
for $a\in \cA$, $f\in X^*$ and $x\in X$.
In particular, $\cA^*$ is a Banach $\cA$-bimodule.
The bidual space $\cA^{**}$ of $\cA$ may be equipped with two Arens products $\Box$ and $\diamond$, respectively defined by
\[ \la f, u\Box v\ra = \la v\cdot f, u\ra, \quad \la a, v\cdot f\ra = \la fa, v\ra \] 
and
\[ \la f, u\diamond v\ra = \la f\cdot u, v\ra, \quad \la a, f\cdot u\ra = \la af, u\ra \] 
for $u,v\in \cA^{**}$, $f\in \cA^*$ and $a\in \cA$. With either $\Box$ or $\diamond$ giving the product, $\cA^{**}$ becomes a Banach algebra containing $\cA$ as a closed subalgebra. For any Banach $\cA$-bimodule $X$, $X^*$ is also a Banach left $(\cA^{**}, \Box)$-module and a Banach right $(\cA^{**}, \diamond)$-module (but in general it is not an $\cA^{**}$-bimodule no matter $\Box$ or $\diamond$ is used for the product of $\cA^{**}$). The corresponding module actions are given by
\[ \la x, u\cdot f\ra = \la f\cdot x, u\ra, \text{ where } f\cdot x\in \cA^*, \; \la a, f\cdot x\ra = \la xa, f\ra \quad (a\in \cA) \]
and 
\[ \la x, f\cdot u\ra = \la x\cdot f, u\ra, \text{ where } x\cdot f\in \cA^*, \; \la a, x\cdot f\ra = \la ax, f\ra \quad (a\in \cA) \]
for $u\in \cA^{**}$, $f\in X^*$ and $x\in X$. For any $u\in \cA^{**}$ we denote by $\ell_u$ and $r_u$ respectively the left multiplier operator and the right multiplier operator on $X^*$ defined by $\ell_u(f) = u\cdot f$ and $r_u(f) = f\cdot u$ ($f\in X^*$). If $\cA$ has a bounded approximate identity $(e_\al)$, we may take a weak* cluster point $E$ of $(e_\al)$ in $\cA^{**}$. Then $\ell_E$ and $r_E$ are $\cA$-bimodule morphisms on $X^*$. 

Let $G$ be a locally compact group and $\omega$ be a weight on it. The dual space of $\L1o$ may be identified with 
\[ \Lio = L^\infty(G, 1/\omega) =\{f: {f}/{\omega}\in L^\infty(G)\} \]
with the norm given by 
\[  \|f\mon = \text{ess}\sup_{t\in G}\left|\frac{f(t)}{\omega(t)}\right| \quad (f\in \Lio). \]

A function $f\in \Lio$ is called \emph{right $\omega$-uniformly continuous} if the mapping $t \mapsto R_t(f)$ is continuous from $G$ into $\Lio$, where $R_t$ denotes the right translation by $t$, i.e. $R_t(f)(s) = f(st)$ ($s\in G$). The space of all right $\omega$-uniformly continuous functions is denoted by $RUC(G,1/\omega)$ (or abbreviated $\RUCo$). It is well-known (see \cite[Proposition~7.17]{D-L} for example) that 
\[\RUCo = \L1o\cdot \Lio. \]

Denote by $C_{00}(G)$ the space of all compactly supported continuous functions on $G$. The closure of $C_{00}(G)$ in $\Lio$ is $\C0o$ which is a Banach $\L1o$-submodule of $\Lio$. The dual space of $\C0o$ is $\Mo$, the space of all complex regular Borel measures $\mu$ on $G$ that satisfy
\[ \|\mu\on = \int_G \omega(t) d|\mu|(t) < \infty, \]
where $|\mu|$ denotes the total variation measure of $\mu$. $\|\mu\on$ is indeed the norm of $\mu$ in $\Mo$.
With the convolution product of measures, which is denoted by $*$, $\Mo$ is a Banach algebra containing $\L1o$ as a closed ideal. In fact, $\Mo$ is the multiplier algebra of $\L1o$ \cite{GAU}.

Let $X$ be a Banach space. Denote the space of all bounded linear operators on $X$ by $B(X)$. The \emph{strong operator topology} (or briefly $so$-topology) on $B(X)\times B(X)$ is the topology induced by the family of seminorms $\{p_x: x\in X\}$, where
\[  p_x(S, T) = \max\{\|S(x)\|, \|T(x)\|\} \quad (S,T\in B(X)) \]
(see \cite[page 327]{DAL}). Indeed, $B(X)$ is a Banach algebra with the operator norm topology and the composition product. So is $B(X)\times B(X)$. 
As the multiplier algebra of $\L1o$, $\Mo$ is actually regarded as a subalgebra of $B(\L1o) \times B(\L1o)$ with each $\mu\in \Mo$ being identified with $(\ell_\mu, r_\mu) \in B(\L1o) \times B(\L1o)$.  See \cite{DAL} for details.

\begin{lemma}\label{so dense}
Let $G$ be a locally compact group and let $\omega$ be a weight on it. Then $lin\{\dt: t\in G\}$, the linear space generated by the point measures $\dt$ ($t\in G$), is dense in $\Mo$ in the $so$-topology. In particular, for each $h\in \L1o$ there is a net $(u_\al)\subset lin\{\dt: t\in G\}$ such that $\|(u_\al - h)*a\on \to 0$ and $\|a*(u_\al - h)\on \to 0$ for all $a\in \L1o$.
\end{lemma}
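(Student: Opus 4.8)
The plan is to unwind the definition of the $so$-topology and reduce the claim to a single quantitative approximation. Recall that $M(\omega)$ sits inside $B(L^1(\omega))\times B(L^1(\omega))$ via $\mu\mapsto(\ell_\mu,r_\mu)$ with $\ell_\mu(a)=\mu*a$ and $r_\mu(a)=a*\mu$, and that the $so$-topology is generated by the seminorms $p_a$. Hence $\mathrm{lin}\{\delta_t:t\in G\}$ is $so$-dense in $M(\omega)$ precisely when, for every $\mu\in M(\omega)$, every finite set $a_1,\dots,a_n\in L^1(\omega)$ and every $\varepsilon>0$, there is $u\in\mathrm{lin}\{\delta_t\}$ with $\|(\mu-u)*a_j\|_\omega<\varepsilon$ and $\|a_j*(\mu-u)\|_\omega<\varepsilon$ for all $j$. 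Granting this, the required net is obtained by indexing the pairs $(F,\varepsilon)$, with $F\subset L^1(\omega)$ finite, directed by inclusion of $F$ and by decreasing $\varepsilon$; the ``in particular'' assertion is then just the special case $\mu=h\in L^1(\omega)\subset M(\omega)$.

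The two ingredients I would use are the integral representations $\mu*a=\int_G(\delta_r*a)\,d\mu(r)$ and $a*\mu=\int_G(a*\delta_r)\,d\mu(r)$, valid in $L^1(\omega)$ (verified by pairing against $L^\infty(1/\omega)$ and Fubini, the absolute convergence coming from $\int_G\|\delta_r*a\|_\omega\,d|\mu|(r)\le\|a\|_\omega\int_G\omega\,d|\mu|<\infty$); and the continuity in the $\omega$-norm of the translation maps $r\mapsto\delta_r*a$ and $r\mapsto a*\delta_r$ from $G$ into $L^1(\omega)$. The latter is the standard fact that translation is strongly continuous in a Beurling algebra with continuous weight: one checks it first for $a\in C_{00}(G)$, where it follows from uniform continuity together with local boundedness of $\omega$, and then passes to general $a$ using density of $C_{00}(G)$ in $L^1(\omega)$ and the estimate $\|\delta_r*a\|_\omega\le\omega(r)\|a\|_\omega$ (with an analogous bound, involving the modular function, on the right).

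With these in hand, fix $\mu$, $a_1,\dots,a_n$ and $\varepsilon$. Since $\mu$ is a complex measure, $|\mu|(G)<\infty$, and since $\int_G\omega\,d|\mu|<\infty$ I may choose a compact set $K$ with $\int_{G\setminus K}\omega\,d|\mu|$ as small as desired; the contribution of $G\setminus K$ to each integral is then controlled by $\|a_j\|_\omega\int_{G\setminus K}\omega\,d|\mu|$. On $K$ the finitely many continuous maps $r\mapsto\delta_r*a_j$ and $r\mapsto a_j*\delta_r$ are uniformly continuous, so I can cover $K$ by finitely many open sets on each of which all of them vary by less than a prescribed $\eta$, disjointify these into Borel sets $E_1,\dots,E_m$ with centres $t_1,\dots,t_m$, and set $u=\sum_{i=1}^m\mu(E_i)\delta_{t_i}$. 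Estimating $\int_K(\delta_r*a_j)\,d\mu(r)-u*a_j=\sum_i\int_{E_i}(\delta_r*a_j-\delta_{t_i}*a_j)\,d\mu(r)$ in norm gives a bound $\eta\,|\mu|(K)\le\eta\,|\mu|(G)$, and likewise on the right; choosing first $K$ and then $\eta$ makes all $2n$ quantities smaller than $\varepsilon$.

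The main obstacle I anticipate is not any single estimate but arranging one $u$ to serve simultaneously for the left and right actions and for all the $a_j$ at once; this is handled by taking the partition fine enough for the \emph{joint} modulus of continuity of all $2n$ translation maps on $K$. The only genuinely analytic input is the strong continuity of translation in the $\omega$-norm, and care is needed there because $\omega$ need not be bounded below: one must localize to compact sets, where $\omega$ is bounded away from $0$ and $\infty$, both when truncating $\mu$ to $K$ and when invoking uniform continuity.
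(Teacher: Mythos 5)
Your proposal is correct, but it follows a genuinely different route from the paper's. The paper proves the lemma by transference to the unweighted case: it quotes \cite[Proposition~3.3.41(i)]{DAL} for the $so$-density of $\mathrm{lin}\{\delta_t: t\in G\}$ in $M(G)$, applies it to $u=\omega\mu\in M(G)$ to obtain a net $(u_\alpha)$ with $u_\alpha*g\to u*g$ and $g*u_\alpha\to g*u$ in $L^1(G)$, and then pulls back by setting $\mu_\alpha=\frac1\omega u_\alpha$ and $g=\omega a$, asserting $\|\mu_\alpha*a-\mu*a\|_\omega\le\|u_\alpha*g-u*g\|_1$. You instead reprove the density from scratch inside $M(\omega)$: the Bochner representation $\mu*a=\int_G(\delta_r*a)\,d\mu(r)$, inner-regular truncation of the finite measure $\omega\,d|\mu|$ to a compact set $K$, and a Borel partition of $K$ fine for the joint modulus of continuity of the finitely many translation maps. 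What the paper's route buys is brevity; what yours buys is soundness. The paper's domination is in fact not a valid inequality: $\mu\mapsto\omega\mu$ is an isometry of Banach spaces but not a module map, and submultiplicativity ($\omega(x)/\omega(r)\le\omega(r^{-1}x)$) compares only the moduli of the integrands $a(r^{-1}x)\omega(x)/\omega(r)$ and $a(r^{-1}x)\omega(r^{-1}x)$, not the signed integrals, so cancellation can make the right-hand side strictly smaller. For example, on $G=\mathbb{Z}$ with $\omega(n)=e^{|n|}$, take $u=e\delta_0$ (so $\mu=e\delta_0$), $u_\alpha=\delta_1$ (so $\mu_\alpha=e^{-1}\delta_1$) and $a=\delta_{-1}+\delta_0$ (so $g=e\delta_{-1}+\delta_0$): then $\|u_\alpha*g-u*g\|_1=e^2+1$ while $\|\mu_\alpha*a-\mu*a\|_\omega=e^2+1+(e-e^{-1})$. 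Your direct discretization, which is essentially the proof of the cited unweighted result carried out with the weight present, avoids this issue entirely and actually repairs the gap.

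Two small points in your write-up. First, the step $\eta\,|\mu|(K)\le\eta\,|\mu|(G)$ should be dropped: only $|\mu|(K)<\infty$ is needed, and that follows because the continuous weight $\omega$ is bounded away from $0$ on the compact set $K$; if one takes $M(\omega)$ to be the full dual of $C_0(1/\omega)$, i.e. $\{\mu:\omega\mu\in M(G)\}$, then $|\mu|(G)$ can be infinite when $\inf_G\omega=0$. Your order of choices (first $K$, then $\eta$) already accommodates this. Second, for the Bochner integral $\int_G(\delta_r*a)\,d\mu(r)$ to exist one should note that, by inner regularity, $\omega\,d|\mu|$ is concentrated on a $\sigma$-compact set, so the continuous map $r\mapsto\delta_r*a$ is essentially separably valued.
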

\begin{proof}
Denote $V = lin\{\dt: t\in G\}$. It is evident that $V\subset \Mo$.
Let $\mu \in \Mo$. We show that there is a net $(\mu_\al)\subset V$ such that 
 $\|\mu_\al *a -\mu *a\on \to 0$  and  $\|a*\mu_\al -a*\mu\on \to 0$
for every $a\in \L1o$.  

By \cite[Proposition~3.3.41(i)]{DAL} $V$ is dense in $M(G)$ in the $so$-topology. Since $u: =\omega \mu \in M(G)$, there is a net $(u_\al)\subset V$ such that \[ \|u_\al *g -u *g\|_1 \to 0 \text{ and } \|g*u_\al -g*u\|_1 \to 0  \]
 for every $g\in L^1(G)$. Let $\mu_\al = \frac{1}{\omega}u_\al$ and $g = \omega a$. Then $\mu_\al$ still belongs to $V$ and $g\in L^1(G)$. We have
\[ \|\mu_\al *a -\mu*a\on \leq \|u_\al *g - u*g\|_1 \to 0 \]
 and 
 \[  \|a*\mu_\al  - a*\mu\on \leq \|g*u_\al  - g*u\|_1 \to 0. \]
 Thus, $\mu \in so$-cl$(V)$.
This is true for every $\mu\in \Mo$. The proof is complete.
\end{proof}

\section{Weak amenability}\label{sect WA}

We denote the additive group of complex numbers (with the usual metric topology) by $\bC$ and denote by $\bR$ the closed subgroup of $\bC$ consisting of all real numbers.

\begin{thm}\label{weak}
Let $G$ be a locally compact Abelian group and $\omega$ be a weight on $G$. Then $\L1o$ is weakly amenable if and only if there exists no nontrivial continuous group homomorphism $\phi$: $G \to \bC$ such that 
\begin{equation}\label{phi} \sup_{t\in G} \frac{|\phi(t)|}{\omega(t)\omega(t^{-1})} < \infty. \end{equation}
\end{thm}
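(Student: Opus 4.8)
The plan is to first reduce everything to the vanishing of derivations. Since $G$ is Abelian, $\L1o$ is commutative, so for $f\in\Lio=\cA^*$ and $a,b\in\cA:=\L1o$ one has $\la b, a\cdot f\ra=\la ab,f\ra=\la ba,f\ra=\la b, f\cdot a\ra$; thus $\cA^*$ is a symmetric bimodule and every inner derivation into $\cA^*$ is zero. Consequently $\L1o$ is weakly amenable \emph{if and only if} the only continuous derivation $D\colon\L1o\to\Lio$ is $D=0$. The two implications of the theorem then become: (i) if a nontrivial $\phi$ with \eqref{phi} exists, build a nonzero continuous derivation; and (ii) if a nonzero continuous derivation exists, extract such a $\phi$.

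For implication (i) I would construct the derivation from a bounded bilinear form. Set $\beta(r)=1/\omega(r^{-1})$ and define the kernel $\Phi(s,u)=\phi(s)\beta(su)=\phi(s)/\omega((su)^{-1})$, together with the bilinear form $B(f,g)=\int_G\int_G f(s)g(u)\Phi(s,u)\,ds\,du$. The submultiplicativity estimate $\omega(s^{-1})\le\omega((su)^{-1})\omega(u)$ gives $|\Phi(s,u)|\le M\,\omega(s)\omega(u)$, where $M=\sup_t|\phi(t)|/(\omega(t)\omega(t^{-1}))<\infty$, so $|B(f,g)|\le M\|f\|_\omega\|g\|_\omega$ and $D(f):=B(f,\cdot)$ lands in $\cA^*=\Lio$ with $\|D\|\le M$. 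The derivation identity $D(f*g)=f\cdot D(g)+g\cdot D(f)$ is equivalent to $B(f*g,h)=B(g,f*h)+B(f,g*h)$, which translates into the pointwise identity $\Phi(sx,w)=\Phi(x,sw)+\Phi(s,xw)$; this holds for \emph{any} $\beta$ because $\phi(sx)=\phi(s)+\phi(x)$ and $G$ is Abelian. Finally $B\neq0$: choosing $s_0$ with $\phi(s_0)\neq0$ and positive bumps $f,g$ concentrated near $s_0$ and near $s_0^{-1}$ makes the integrand essentially $\phi(s_0)/\omega((s_0u_0)^{-1})\neq0$. Hence $D\neq0$ and $\L1o$ is not weakly amenable.

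For implication (ii) I would pass to point masses. Since $\L1o$ has a bounded approximate identity, a nonzero continuous derivation $D\colon\L1o\to\Lio$ extends to a derivation $\widetilde D\colon\Mo\to\Lio$, and Lemma~\ref{so dense} lets me evaluate it on the $so$-dense family of point measures; write $d_t:=\widetilde D(\dt)$. The module action of $\dt$ on $\Lio$ is the translation $(T_t h)(v)=h(vt)$, and it is symmetric because $\cA$ is commutative, so applying $\widetilde D$ to $\dt*\delta_s=\delta_{ts}$ yields the cocycle $d_{st}=T_s d_t+T_t d_s$. Setting $\psi_t:=T_{t^{-1}}d_t$ and using $T_aT_b=T_{ab}$ gives the additivity $\psi_{st}=\psi_s+\psi_t$. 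Moreover $\|d_t\|\mon\le\|D\|\,\omega(t)$ and $\|T_{t^{-1}}\|\le\omega(t^{-1})$, so $\|\psi_t\|\mon\le\|D\|\,\omega(t)\omega(t^{-1})$. Pairing with a fixed element $g_0$ of the predual, $\phi(t):=\la g_0,\psi_t\ra$ is additive and satisfies $|\phi(t)|\le\|g_0\|\,\|D\|\,\omega(t)\omega(t^{-1})$, i.e.\ \eqref{phi} holds automatically. If $\psi_t\equiv0$ then $d_t\equiv0$, and by $so$-density $\widetilde D=0$, contradicting $D\neq0$; so some $\psi_t\neq0$ and a suitable $g_0$ makes $\phi$ nontrivial.

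I expect the main obstacle to sit in implication (ii): the abstract extension of $D$ to $\Mo$ (or to the bidual) and, above all, the continuity of the produced $\phi$. Additivity and the norm bound are immediate once $d_t$ is in hand, but showing that $t\mapsto\psi_t$ is weak\(^*\)-continuous — so that pairing with a predual element yields a genuinely \emph{continuous} homomorphism $\phi\colon G\to\bC$, while still being nonzero — is the delicate point and is precisely where Lemma~\ref{so dense} and the strong-operator continuity of $t\mapsto\dt$ must be used carefully. By contrast, implication (i) is routine once the kernel $\Phi(s,u)=\phi(s)/\omega((su)^{-1})$ is guessed: boundedness, the cocycle identity, and nontriviality are all short verifications.
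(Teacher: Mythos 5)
Your opening reduction (weak amenability of the commutative algebra $L^1(G,\omega)$ is equivalent to the vanishing of every continuous derivation into $L^\infty(G,1/\omega)$) is correct, and your implication (i) is a complete argument. It is a clean variant of the paper's construction: the paper fixes a compact neighbourhood $B$ of $e$ and uses the kernel $\chi_B(su)$, i.e.\ $D(h)(t)=\int_B\phi(t^{-1}\xi)h(t^{-1}\xi)\,d\xi$, while you take the kernel $\phi(s)/\omega((su)^{-1})$. In both cases the kernel depends on $(s,u)$ only through the product $su$, which is what makes the cocycle identity hold, and your boundedness estimate via submultiplicativity is correct, as is the nontriviality argument with bump functions. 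So the sufficiency half stands.

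The genuine gap is in implication (ii), at exactly the two points you flag and then defer: (a) nontriviality of the resulting $\phi$ and (b) its continuity. These are not routine consequences of Lemma~\ref{so dense} together with the $so$-continuity of $t\mapsto\delta_t$; they are the substance of the paper's proof, and the missing tool is Cohen's factorization theorem. For (a): to pass from ``$\psi_t=0$ for all $t$'' to ``$\widetilde D=0$'' via the $so$-density of $\mathrm{span}\{\delta_t\}$, you need $\widetilde D$ to be $so$-weak$^*$ continuous on $M(\omega)$. The paper obtains this by factorizing $f=f_1*f_2$ (Cohen, using the bounded approximate identity) and using the derivation identity to write $\langle f,\widetilde D(\mu)\rangle=\langle f_1,\widetilde D(f_2*\mu)\rangle-\langle\mu*f_1,\widetilde D(f_2)\rangle$, in which only norm-convergent arguments appear. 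A naive approximation argument ($f\approx f*e_\lambda$ plus an $\varepsilon/3$ estimate) does not obviously close, because the approximating nets furnished by Lemma~\ref{so dense} need not be bounded in $M(\omega)$-norm, so $\|\widetilde D(\mu_\alpha)\|$ cannot be controlled uniformly. For (b): pairing $\psi_t$ with a predual element only yields a continuous $\phi$ once one knows $t\mapsto\psi_t$ is weak$^*$ continuous (with locally bounded norms), and that again rests on (a); the paper instead gets continuity directly by writing $h=h_1*h_2$ and $\phi(t)=\langle h_1,D(h_2)\rangle-\langle\delta_t*h_1,D(h_2*\delta_{t^{-1}})\rangle$, where both entries vary norm-continuously in $t$. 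Until (a) and (b) are actually proved---by this factorization device or an equivalent one---your implication (ii), and hence the characterization, is not established.
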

\begin{proof}
If $\L1o$ is not weakly amenable, then there is a nonzero continuous derivation $D$: $\L1o \to \Lio$. It is standard (see \cite{JOH} for example) that one can extend $D$ to a derivation, still denoted by $D$, from $\Mo$ to $\Lio$.  Define $\Delta(t) = \dtm \cdot D(\dt)$ ($t\in G$). Then $\Delta$ satisfies
\begin{align}
\Delta(t_1t_2) = \delta_{t_2^{-1}t_1^{-1}}\cdot D(\delta_{t_1}\delta_{t_2}) &= \delta_{t_1^{-1}}\cdot D(\delta_{t_1}) + \delta_{t_2^{-1}}\cdot D(\delta_{t_2})\notag \\
                                                        & = \Delta(t_1) + \Delta(t_2) \label{Del}%\Delta(t_1t_2) = \Delta(t_1) + \Delta(t_2) \quad (t_1, t_2 \in G)
 \end{align}
for $t_1, t_2 \in G$, and $\Delta(e) = 0$, where $e$ is the unit of $G$. 

We note that $D$ is $so$-weak* continuous. In fact, since $\L1o$ has a bounded approximate identity, by Cohen's Factorization Theorem every $f\in \L1o$ may be written as $f= f_1*f_2$ for some $f_1, f_2\in \L1o$. So, if $\mu_\al \overset{so}{\to} \mu$ in $\Mo$, then 
\begin{align*} 
\lim_\al \la f, D(\mu_\al) \ra &= \lim_\al \la f_1, D(f_2*\mu_\al)\ra - \lim_\al \la \mu_\al*f_1, D(f_2)\ra \\
                                            &= \la f_1, D(f_2*\mu)\ra - \la \mu*f_1, D(f_2)\ra = \la f, D(\mu) \ra.
\end{align*}
This clarifies the $so$-weak* continuity of $D$.
Since $span\{\dt: t\in G\}$ is dense in $\Mo$ in the $so$-topology (Lemma~\ref{so dense}), $\Delta$ is a nontrivial mapping from $G$ to $\Lio$. So there is $h\in \L1o$ such that $\phi(t) = \la h, \Delta(t) \ra$ is a nontrivial complex valued function on $G$. By (\ref{Del}) the function $\phi$ is clearly a group homomorphism from $G$ to $\bC$. It is also continuous. To see this (due to again Cohen's Factorization Theorem) we write  $h = h_1*h_2$ for some $h_1,h_2 \in \L1o$. Then
\[ \phi(t) = \la h_1, D(h_2)\ra - \la\dt*h_1, D(h_2*\dtm) \ra , \]
which is clearly continuous in $t$.  Moreover, 
\[ |\phi(t)| \leq (\|D\| \|h\on) \omega(t)\omega(t^{-1}) \quad (t \in G). \]
 Thus $\phi$: $G\to \bC$ is a nontrivial continuous group homomorphism and it satisfies (\ref{phi}).

For the converse, we assume $\phi$: $G \to \bC$ is a continuous nontrivial group homomorphism that satisfies 
\[ \sup_{t\in G} \frac{|\phi(t)|}{\omega(t)\omega(t^{-1})} \leq m \]
for some $m < \infty$.
 Fix a compact neighborhood $B$ of $e$ in $G$. For each $h\in \L1o$ %let $\check D(h)$ be the function on $G$ defined by
%\[ \check D(h)(t) = \int_B{\phi(t\xi)h(t\xi)}d\xi \quad (t\in G). \]
%It is standard to check that $\check D(h)$ is indeed a continuous function on $G$. Then
 we define %$D(h) = (\check D(h))^\lor$, that is
\begin{equation}\label{D}
  D(h)(t) = \int_B{\phi(t^{-1}\xi)h(t^{-1}\xi)}d\xi \quad (t\in G). 
\end{equation}
It is standard to check that $D(h)(t)$ is continuous (and hence is measurable) on $G$. Since
\[  \left|\frac{D(h)(t)}{\omega(t)}\right|\leq m %\frac{|\phi(t^{-1})|}{\omega(t)\omega(t^{-1})}
\int_B{%|\phi(\xi)|
\omega(\xi^{-1})\omega(t^{-1}\xi)|h(t^{-1}\xi)}|d\xi\leq ml\|h\on \]
for all $t\in G$, we derive $D(h)\in \Lio$ for each $h\in \L1o$, where %$k = \sup\{|\phi(s)|: s\in B\}$ and
 \[
l = \sup\{\omega(s^{-1}): s\in B \}.
\]
 The mapping $h\mapsto D(h)$ is clearly a nonzero bounded linear mapping from $\L1o$ to $\Lio$. We show it is indeed a derivation. Let $a, b\in \L1o$. Then
\begin{align*}
 D(a*b)(t) &= \int_B\phi(t^{-1}\xi)\int_G{a(s)b(s^{-1}t^{-1}\xi)}ds d\xi \\
   & = \int_B{\int_G{a(s)\left(\phi(s^{-1}t^{-1}\xi) + \phi(s)\right)b(s^{-1}t^{-1}\xi)}}ds d\xi \\
   & = \int_G{a(s)D(b)(ts)}ds + \int_G{D(a)(st)b(s)}ds \\
   & = [a\cdot D(b) + D(a)\cdot b](t) \quad (t\in G).
\end{align*}
In the above computation we have used the Fubini theorem to exchange the order of integrals. We can do this because $B$ is compact and the supports of $a$ and $b$ are $\sigma$-finite. Therefore $D(a*b) = a\cdot D(b) + D(a)\cdot b$ for all $a,b\in \L1o$, i.e. $D$: $\L1o\to \Lio$ is a nonzero continuous derivation. Thus $\L1o$ is not weakly amenable.
\end{proof}

\begin{remark}\label{IN}
If $G$ is an IN group, we take any compact neighborhood $B$ of $e$ in $G$ such that $sBs^{-1} = B$ for all $s\in G$. Then the argument for the necessity part of Theorem~\ref{weak} may be adapted to show the following: If $\L1o$ is weakly amenable then there is no continuous group homomorphism $\phi$: $G \to \bC$ such that $\phi$ is not trivial on $B$ and such that (\ref{phi}) holds. To see this we first note $\int_B{f(s\xi)}d\xi = \int_B{f(\xi s)}d\xi$ for $f\in L^1(G,\omega)$ and $s\in G$. This property ensures that the mapping $D$ defined by (\ref{D}) is still a continuous derivation, assuming the above $\phi$ exists. If $D$ is inner then it must be trivial at all $h$ belonging to the center $Z\L1o$ of $\L1o$.  However, $h_\phi := \overline\phi \chi_B\in Z\L1o$ and
\[
D(h_\phi)(t) = \int_{B\cap tB}{|\phi(t^{-1}\xi)|^2}d\xi = \int_{B\cap t^{-1}B}{|\phi(\xi)|^2}d\xi
\]
is nontrivial if $\phi$ is not trivial on $B$,
where $\overline\phi$ is the conjugate of $\phi$ and $\chi_B$ is the characteristic function of $B$. So $D$ is not inner and thus $\L1o$ is not weakly amenable.

\end{remark}

\begin{remark}
For a discrete Abelian group $G$, Theorem~\ref{weak} was obtained by Gr\o nb\ae k in \cite{GRO-weak}. As indicated there, when $G = \bZ$ (the discrete additive group of all integers) all group homomorphisms from $\bZ$ to $\bC$ are of the form $\phi(n) = nc_0$ ($n\in \bZ$, $c_0\in \bC$). Therefore, for any weight $\omega$ on $\bZ$, $\ell^1(\bZ, \omega)$ is weakly amenable if and only if $\sup_{n\in \bN} \frac{n}{\omega(n)\omega(-n)} = \infty$, or equivalently, $\inf_{n\in \bN} \frac{\omega(n)\omega(-n)}{n} = 0$.
\end{remark}

The above argument certainly works also for $G = \bR$. But we have more to say  for $\bR$ later in Corollary ~\ref{cor R}.

\begin{remark}
Since a commutative Banach algebra is permanently weakly amenable if and only if it is weakly amenable \cite{D-G-G}, the condition in Theorem~\ref{weak} is also a necessary and sufficient condition for $\L1o$ to be permanently weakly amenable.
\end{remark}

When one applies Theorem~\ref{weak}, it suffices to consider only real valued group homomorphisms. Precisely we have the following theorem.

\begin{thm}\label{real homo}
Let $G$ be a locally compact Abelian group and $\omega$ be a weight on $G$. Then $\L1o$ is weakly amenable if and only if there exists no nontrivial continuous group homomorphism $\phi$: $G \to \bR$ such that (\ref{phi}) holds.
%\begin{equation}\label{phi} \sup_{t\in G} \frac{|\phi(t)|}{\omega(t)\omega(t^{-1})} < \infty. \end{equation}
\end{thm}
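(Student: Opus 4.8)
The plan is to obtain this theorem as an essentially formal consequence of Theorem~\ref{weak}, which has already done the analytic work of characterizing weak amenability of $\L1o$ through the absence of a nontrivial continuous homomorphism $\phi$: $G \to \bC$ satisfying (\ref{phi}). All that I need to add is the observation that, for this class of homomorphisms, one may always pass to a real-valued one without losing either nontriviality or the growth bound (\ref{phi}).

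One direction I would dispose of immediately: since $\bR$ is a closed subgroup of $\bC$, any nontrivial continuous homomorphism $\phi$: $G \to \bR$ obeying (\ref{phi}) is in particular a nontrivial continuous homomorphism $G \to \bC$ obeying (\ref{phi}), so by Theorem~\ref{weak} its existence already precludes the weak amenability of $\L1o$.

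For the converse I would start from Theorem~\ref{weak}: if $\L1o$ is not weakly amenable, there is a nontrivial continuous homomorphism $\phi$: $G \to \bC$ with $\sup_{t\in G}\frac{|\phi(t)|}{\omega(t)\omega(t^{-1})} < \infty$. I would then split $\phi = \phi_1 + i\phi_2$ into its real and imaginary parts. Because $\phi$ maps into the additive group $(\bC,+)$, the relation $\phi(t_1t_2)=\phi(t_1)+\phi(t_2)$ separates into $\phi_j(t_1t_2)=\phi_j(t_1)+\phi_j(t_2)$ for $j=1,2$, so each $\phi_j$ is a continuous real-valued group homomorphism; and since $|\phi_j(t)|\le|\phi(t)|$ pointwise, each $\phi_j$ inherits the bound (\ref{phi}). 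As $\phi$ is not identically zero, at least one $\phi_j$ is nontrivial, and that $\phi_j$ is the required homomorphism $G\to\bR$ satisfying (\ref{phi}).

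I do not expect a genuine obstacle in this argument; its whole analytic substance is borrowed from Theorem~\ref{weak}. The only points needing a moment's verification are that taking real and imaginary parts preserves both the homomorphism property and continuity (which is clear, since these are the coordinate projections of a continuous additive map), and that at least one coordinate of a nonzero $\bC$-valued homomorphism is itself nonzero (also immediate). Thus the passage from complex to real homomorphisms is routine, and the theorem follows.
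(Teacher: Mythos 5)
Your proposal is correct and follows essentially the same route as the paper: both directions are deduced from Theorem~\ref{weak}, the necessity by viewing a real-valued homomorphism as a complex-valued one, and the sufficiency by decomposing a complex homomorphism into real and imaginary parts, each of which remains a continuous homomorphism satisfying (\ref{phi}), with at least one nontrivial. No gaps; this matches the paper's argument.
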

\begin{proof}
The necessity is trivial. For the sufficiency, suppose that $\L1o$ is not weakly amenable. Then, by Theorem~\ref{weak}, there is a continuous complex valued nonzero homomorphism $\phi$ such that (\ref{phi}) holds. The real part $\phi_r$ and the imaginary part $\phi_i$ of $\phi$ are both still continuous group homomorphisms, they satisfy the same inequality (\ref{phi}), and they are real valued. If $\phi \neq 0$ then at least one of $\phi_r$ and $\phi_i$ is nonzero. So there exists a nontrivial continuous real valued group homomorphism such that (\ref{phi}) holds. 
\end{proof}

We now consider some special cases to illustrate how Theorem~\ref{weak} applies.% The following result was obtained by Samei in \cite{SAM}.
\begin{cor}[\cite{SAM}]
Let $G$ be a locally compact Abelian group and $\omega$ be a weight on $G$. If for each $t\in G$ 
\begin{equation}\label{inf} 
\inf_{n\in \bN} \frac{\omega(t^n)\omega(t^{-n})}{n} = 0,
\end{equation}
then $\L1o$ is weakly amenable.
\end{cor}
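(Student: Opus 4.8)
The plan is to derive this corollary directly from Theorem~\ref{weak} (or equivalently Theorem~\ref{real homo}) by a contrapositive argument. Suppose $\L1o$ is \emph{not} weakly amenable. Then by Theorem~\ref{weak} there is a nontrivial continuous group homomorphism $\phi\colon G\to\bC$ and a constant $m<\infty$ with $|\phi(t)|\leq m\,\omega(t)\omega(t^{-1})$ for all $t\in G$. I must show this contradicts hypothesis (\ref{inf}), i.e. I must exhibit some $t\in G$ for which $\inf_{n\in\bN}\frac{\omega(t^n)\omega(t^{-n})}{n}>0$.

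First I would fix a $t$ at which $\phi$ is nontrivial, say $\phi(t)=c\neq 0$; such a $t$ exists because $\phi$ is a nontrivial homomorphism. The key observation is that $\phi$, being a homomorphism into the \emph{additive} group $\bC$, satisfies $\phi(t^n)=n\,\phi(t)=nc$ for every $n\in\bN$. Applying the growth bound from Theorem~\ref{weak} at the point $t^n$ gives
\begin{equation}\label{corest}
n\,|c| = |\phi(t^n)| \leq m\,\omega(t^n)\omega(t^{-n}),
\end{equation}
so that $\frac{\omega(t^n)\omega(t^{-n})}{n}\geq \frac{|c|}{m}>0$ for all $n\in\bN$.

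Taking the infimum over $n$ in (\ref{corest}) yields $\inf_{n\in\bN}\frac{\omega(t^n)\omega(t^{-n})}{n}\geq \frac{|c|}{m}>0$, which directly violates the hypothesis (\ref{inf}) at this particular $t$. This contradiction shows that no such homomorphism $\phi$ can exist, and hence by Theorem~\ref{weak} the algebra $\L1o$ is weakly amenable.

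I do not anticipate any serious obstacle here: the proof is a short deduction from the main theorem, and the only thing to get right is the elementary point that an additive homomorphism scales as $\phi(t^n)=n\phi(t)$, which converts the multiplicative behaviour of $\omega$ into the linear-in-$n$ lower bound. The one conceptual subtlety worth noting is that (\ref{inf}) is stated as a condition holding for \emph{all} $t\in G$, whereas to reach the contradiction I only need it to fail at a \emph{single} $t$; selecting a $t$ with $\phi(t)\neq 0$ is exactly what supplies that single witness. Everything else is routine.
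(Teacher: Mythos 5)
Your proof is correct and is essentially the paper's own argument: both deduce the corollary from Theorem~\ref{weak} via the identity $\phi(t^n)=n\phi(t)$, which turns the bound $|\phi|\leq m\,\omega(\cdot)\omega(\cdot^{-1})$ into a linear-in-$n$ lower bound on $\omega(t^n)\omega(t^{-n})$ contradicting (\ref{inf}) at a single point where $\phi$ is nonzero. The only cosmetic difference is that you argue contrapositively from the complex-valued version (Theorem~\ref{weak}), while the paper argues directly that every nontrivial real-valued homomorphism violates (\ref{phi}) and invokes Theorem~\ref{real homo}.
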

\begin{proof}
Let $\phi$: $G\to \bR$ be any nontrivial group homomorphism and let $s\in G$ be such that $\phi(s) \neq 0$. We have $\phi(s^n) = n\phi(s)$ ($n\in \bN$). If (\ref{inf}) holds for $t=s$, then
\[ \sup_{t\in G} \frac{|\phi(t)|}{\omega(t)\omega(t^{-1})} \geq \sup_{n\in \bN} \frac{|\phi(s^n)|}{\omega(s^n)\omega(s^{-n})} =\sup_{n\in \bN} \frac{|\phi(s)|n}{\omega(s^n)\omega(s^{-n})} = \infty. \]
So (\ref{phi}) does not hold for any such nonzero homomorphism $\phi$. By Theorem~\ref{real homo}, $\L1o$ is weakly amenable.
\end{proof}

\begin{cor}\label{cor R}
Let $\omega$ be a weight on $\bR$. Then the following statements are equivalent.
\begin{enumerate}
\item The Beurling algebra $L^1(\bR, \omega)$ is weakly amenable. \label{R weak}
\item $\limsup_{t\to \infty} \frac{|\phi(t)|}{\omega(t)\omega(-t)} = \infty$ for each nonzero continuous group homomorphism $\phi$: $\bR \to \bC$.\label{R phi}
\item $\liminf_{t\to \infty} \frac{\omega(t)\omega(-t)}{|t|} = 0$. \label{limit t}
\item $\liminf_{n\to \infty} \frac{\omega(nt)\omega(-nt)}{n} = 0$ for all $t \in \bR$. \label{limit nt}
\item $\liminf_{n\to \infty} \frac{\omega(n)\omega(-n)}{n} = 0$.\label{limit n}
\item There is $t_0\in \bR$ such that $t_0 \neq 0$ and \mbox{$\liminf_{n\to \infty} \frac{\omega(nt_0)\omega(-nt_0)}{n} = 0$.\label{limit t0}}
\end{enumerate}
\end{cor}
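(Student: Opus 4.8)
The plan is to reduce everything to the even, submultiplicative function $\Omega(t)=\omega(t)\omega(-t)$ and then transfer a single growth condition between the continuous and the discrete settings. Two elementary observations drive the whole argument. First, every continuous group homomorphism $\phi\colon\bR\to\bC$ is linear, i.e. $\phi(t)=ct$ for some $c\in\bC$, so the nontrivial ones are exactly the nonzero scalar multiples of $t\mapsto t$. Second, submultiplicativity of $\omega$ gives $\Omega(s+t)\le\Omega(s)\Omega(t)$, while $\Omega(t)=\Omega(-t)\ge\omega(0)\ge 1$ and $\Omega$ is continuous, hence bounded on every compact interval. I would organise the proof as the block (\ref{R weak})$\Leftrightarrow$(\ref{R phi})$\Leftrightarrow$(\ref{limit t}) together with the cycle (\ref{limit t})$\Rightarrow$(\ref{limit nt})$\Rightarrow$(\ref{limit n})$\Rightarrow$(\ref{limit t0})$\Rightarrow$(\ref{limit t}).

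For (\ref{R weak})$\Leftrightarrow$(\ref{R phi})$\Leftrightarrow$(\ref{limit t}): by Theorem~\ref{weak}, (\ref{R weak}) fails precisely when some nonzero $\phi$ satisfies $\sup_{t}|\phi(t)|/\Omega(t)<\infty$; by linearity the finiteness of this supremum is independent of the nonzero scalar $c$ and amounts to $\sup_{t\in\bR}|t|/\Omega(t)<\infty$. Since $|t|/\Omega(t)$ is continuous and $\Omega\ge 1$, it is bounded on each compact set, so the finiteness of this supremum is equivalent to $\limsup_{t\to\infty}|t|/\Omega(t)<\infty$; negating and noting that (\ref{R phi}) (for all nonzero $\phi$) says exactly $\limsup_{t\to\infty}|t|/\Omega(t)=\infty$ gives (\ref{R weak})$\Leftrightarrow$(\ref{R phi}). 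Finally, because $\Omega>0$ one has the reciprocal identity $\liminf_{t\to\infty}\Omega(t)/|t|=1/\limsup_{t\to\infty}|t|/\Omega(t)$, so that last condition is precisely (\ref{limit t}).

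For the cycle, (\ref{limit nt})$\Rightarrow$(\ref{limit n}) (put $t=1$) and (\ref{limit n})$\Rightarrow$(\ref{limit t0}) (put $t_0=1$) are immediate, while (\ref{limit t0})$\Rightarrow$(\ref{limit t}) follows by taking the sequence $s_k=n_kt_0\to\infty$ realizing the liminf in (\ref{limit t0}) and observing $\Omega(s_k)/|s_k|=|t_0|^{-1}\Omega(n_kt_0)/n_k\to 0$. The substantive step is (\ref{limit t})$\Rightarrow$(\ref{limit nt}). Fixing $t\ne 0$ (the case $t=0$ being trivial, as $\Omega(0)/n\to 0$) and using evenness to assume $t>0$, I would choose $s_k\to\infty$ with $\Omega(s_k)/s_k\to 0$ and write $s_k=n_kt+r_k$ with $n_k=\lfloor s_k/t\rfloor$ and $r_k\in[0,t)$. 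Submultiplicativity and evenness give $\Omega(n_kt)\le\Omega(s_k)\Omega(r_k)\le C_t\,\Omega(s_k)$ with $C_t=\sup_{0\le r\le t}\Omega(r)<\infty$, and since $n_k\sim s_k/t$ one gets $\Omega(n_kt)/n_k\le 2C_t t\,\Omega(s_k)/s_k\to 0$ for large $k$, which is (\ref{limit nt}).

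The hard part will be exactly (\ref{limit t})$\Rightarrow$(\ref{limit nt}): the hypothesis supplies real numbers $s_k$ along which $\Omega(s_k)/s_k$ is small, but there is no reason these lie near integer multiples of a prescribed $t$. The submultiplicative inequality together with the boundedness of $\Omega$ on compacta is precisely what lets me round $s_k$ down to the nearest multiple of $t$ at the cost of the fixed factor $C_t$, which is then absorbed in the limit. Everything else is routine bookkeeping with the reciprocal relation between $\liminf$ and $\limsup$ and with the linear form of the characters of $\bR$.
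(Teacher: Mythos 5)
Your proof is correct and takes essentially the same route as the paper: both reduce (\ref{R weak})$\Leftrightarrow$(\ref{R phi})$\Leftrightarrow$(\ref{limit t}) to Theorem~\ref{weak} plus the fact that continuous characters of $\bR$ are the maps $t\mapsto ct$, and both close the cycle with the trivial implications (\ref{limit nt})$\Rightarrow$(\ref{limit n})$\Rightarrow$(\ref{limit t0})$\Rightarrow$(\ref{limit t}). Your key step (\ref{limit t})$\Rightarrow$(\ref{limit nt}) is the same rounding-plus-submultiplicativity argument as the paper's (there written $t_i=n_it_0+s_i$ with $0\le s_i<t_0$, the correction factor $\frac{1}{(t_0+s_i/n_i)\omega(s_i)\omega(-s_i)}$ being bounded away from $0$ exactly as your constant $C_t$ is absorbed), so there is no substantive difference.
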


\begin{proof}
The equivalence of (\ref{R weak}) and (\ref{R phi}) follows straightforward from Theorem~\ref{weak}.  

(\ref{R phi})$\Rightarrow$(\ref{limit t}):  Simply consider $\phi(t) = t$ ($t\in \bR$). We see immediately that (\ref{limit t}) holds if (\ref{R phi}) is true.

(\ref{limit t})$\Rightarrow$(\ref{R phi}): 
Given a nonzero continuous group homomorphism $\phi$: $\bR \to \bC$, it is a well-known fact that there is $z_0\in \bC$,  $z_0 \neq 0$, such that  $\phi(t) = tz_0$. Thus,
\[ \frac{|\phi(t)|}{\omega(t)\omega(-t)} = |z_0|\frac{|t|}{\omega(t)\omega(-t)} . \]
This relation shows that (\ref{R phi}) is the case if (\ref{limit t}) holds.

(\ref{limit t})$\Rightarrow$(\ref{limit nt}): If $t=0$, the limit in (\ref{limit nt}) is trivially true. If $t\neq 0$, without loss of generality, we may assume $t=t_0 > 0$. If (\ref{limit t}) holds then there is a positive sequence $(t_i)\subset \bR$ such that $t_i \to \infty$ and
\[ \lim_{i\to \infty} \frac{\omega(t_i)\omega(-t_i)}{t_i} = 0. \]
Take $n_i\in \bN$ and $0\leq s_i <t_0$ such that $t_i = n_it_0 + s_i$. We have
\[ \frac{\omega(t_i)\omega(-t_i)}{t_i} \geq \frac{1}{(t_0 + s_i/n_i)\omega(s_i)\omega(-s_i)} \frac{\omega(n_i t_0)\omega(-n_i t_0)}{n_i}. \]
Since $0\leq s_i < t_0$, $n_i \to \infty$ and $\frac{1}{(t_0 + s_i/n_i)\omega(s_i)\omega(-s_i)}$ is uniformly bounded away from $0$ as $i\to \infty$. The above inequality leads to 
\[ \lim_{i\to \infty} \frac{\omega(n_it_0)\omega(-n_it_0)}{n_i} = 0.\]
This shows that (\ref{limit nt}) holds when $t = t_0$ for all $t_0 > 0$. Therefore it holds for all $t\in \bR$.

(\ref{limit nt})$\Rightarrow$ (\ref{limit n}), (\ref{limit n})$\Rightarrow$(\ref{limit t0}) and (\ref{limit t0})$\Rightarrow$(\ref{limit t}) are trivial. The proof is complete.

% let $\phi(t) = t/t_0$. Using (\ref{R phi}) for this $\phi$

%Conversely, suppose that $L^1(\bR, \omega)$ is weakly amenable. Then (\ref{R phi}) holds for each nonzero continuous group homomorphism $\phi$: $\bR \to \bC$ by Theorem~\ref{weak}. Consider $\phi(t) = t$ ($t\in \bR$). We see immediately that (\ref{for R}) holds.
\end{proof}

Let $H$ and $R$ be two locally compact Abelian groups. We consider the product group $H\times R=\{(s,t): s\in H, t\in R\}$. With the product topology it is a commutative locally compact group. We may regard $H$ and $R$ as closed subgroups of $H\times R$, identifying $s$ with $(s, e_R)$ and $t$ with $(e_H, t)$ for $s\in H$ and $t\in R$, where $e_H$ and $e_R$ are identities of $H$ and $R$ respectively. Let $\omega$ be a weight on $H\times R$. Then $\omega_H = \omega|_{H}$ and $\omega_R = \omega|_{R}$ are weights on $H$ and $R$ respectively. Following \cite{GRO} we denote the symmetrization of $\omega$ by $\Omega$, that is,
 $\Omega(s,t) = \omega(s,t)\omega(s^{-1}, t^{-1})$ ($s\in H$, $t\in R$).
\begin{thm}\label{product}
If both $L^1(H, \omega_H)$ and $L^1(R, \omega_R)$ are weakly amenable then so is $L^1(H\times R, \omega)$. Conversely, the algebra \mbox{$L^1(H\times R, \omega)$} is not weakly amenable in any of the following conditions:
\begin{enumerate}
\item $L^1(H, \omega_H)$ is not weakly amenable and $\sup_{(s,t)\in H\times R} \frac{\Omega(s,e_R)}{\Omega(s,t)} < \infty$.\label{G1}
\item $L^1(R, \omega_R)$ is not weakly amenable and $\sup_{(s,t)\in H\times R} \frac{\Omega(e_H, t)}{\Omega(s,t)} < \infty$.\label{G2}
\end{enumerate}
%The Burling algebra $L^1(H\times R, \omega)$ is weakly amenable if and only if both $L^1(H, \omega_H)$ and $L^1(R, \omega_R)$ are weakly amenable.
\end{thm}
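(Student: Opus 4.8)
The plan is to reduce everything to Theorem~\ref{weak}, which characterizes weak amenability of $\L1o$ by the absence of a nontrivial continuous homomorphism $\phi\colon G\to\bC$ with $|\phi(g)|/\bigl(\omega(g)\omega(g^{-1})\bigr)$ bounded. For $G=H\times R$ the whole argument rests on two elementary observations. First, every continuous homomorphism $\phi\colon H\times R\to\bC$ splits as $\phi(s,t)=\phi_H(s)+\phi_R(t)$, where $\phi_H(s)=\phi(s,e_R)$ and $\phi_R(t)=\phi(e_H,t)$ are continuous homomorphisms on $H$ and $R$; this is immediate from $(s,t)=(s,e_R)(e_H,t)$ and additivity in $\bC$, together with continuity of the coordinate embeddings. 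Second, restricting the symmetrization to a coordinate axis gives $\Omega(s,e_R)=\omega(s,e_R)\omega(s^{-1},e_R)=\omega_H(s)\omega_H(s^{-1})=\Omega_H(s)$, and likewise $\Omega(e_H,t)=\Omega_R(t)$, where $\Omega_H,\Omega_R$ denote the symmetrizations of $\omega_H,\omega_R$.

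For the forward implication I would argue by contraposition. Assume $L^1(H\times R,\omega)$ is not weakly amenable, so by Theorem~\ref{weak} there is a nontrivial continuous $\phi$ with $\sup_{(s,t)}|\phi(s,t)|/\Omega(s,t)=:M<\infty$. Specializing to $t=e_R$ and using $\Omega(s,e_R)=\Omega_H(s)$ yields $\sup_s|\phi_H(s)|/\Omega_H(s)\le M<\infty$; since $L^1(H,\omega_H)$ is weakly amenable, Theorem~\ref{weak} forces $\phi_H\equiv 0$. The same argument with $s=e_H$ gives $\phi_R\equiv 0$, whence $\phi=\phi_H+\phi_R\equiv 0$, a contradiction. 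Thus weak amenability of both factors implies weak amenability of the product.

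For the converse under condition (\ref{G1}), since $L^1(H,\omega_H)$ is not weakly amenable there is, by Theorem~\ref{weak}, a nontrivial continuous $\phi_H\colon H\to\bC$ with $\sup_s|\phi_H(s)|/\Omega_H(s)=:M'<\infty$. I would then lift it through the projection, setting $\phi(s,t)=\phi_H(s)$, a nontrivial continuous homomorphism on $H\times R$. The point is to bound $|\phi|/\Omega$: writing
\[ \frac{|\phi(s,t)|}{\Omega(s,t)}=\frac{|\phi_H(s)|}{\Omega_H(s)}\cdot\frac{\Omega(s,e_R)}{\Omega(s,t)} \le M'\,\sup_{(s,t)}\frac{\Omega(s,e_R)}{\Omega(s,t)}, \]
the hypothesis $\sup_{(s,t)}\Omega(s,e_R)/\Omega(s,t)<\infty$ makes the right-hand side finite. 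Hence $\phi$ witnesses, via Theorem~\ref{weak}, that $L^1(H\times R,\omega)$ is not weakly amenable. Condition (\ref{G2}) is handled identically with the roles of $H$ and $R$ exchanged.

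The substantive content is entirely in managing the mismatch between the denominator $\Omega(s,t)$ on the product and the factor symmetrizations $\Omega_H,\Omega_R$. In the forward direction this is costless, since restriction to a coordinate axis replaces $\Omega(s,t)$ by exactly $\Omega_H(s)$ (resp. $\Omega_R(t)$), so no side hypothesis is needed. In the converse the mismatch is genuine: transporting a bounded homomorphism from a factor back up to the product requires comparing $\Omega(s,t)$ with $\Omega(s,e_R)$, and this is precisely the step that could fail in general. The extra ratio conditions in (\ref{G1}) and (\ref{G2}) are exactly what guarantee this comparison, which is why they appear in the statement; I expect verifying that these are the minimal hypotheses making the lift bounded to be the only delicate point.
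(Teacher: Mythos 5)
Your proposal is correct and takes essentially the same route as the paper's proof: both directions reduce to Theorem~\ref{weak}, the forward implication by restricting a witnessing homomorphism to the coordinate subgroups (where $\Omega(s,e_R)=\omega_H(s)\omega_H(s^{-1})$, so the bound transfers at no cost), and the converse by lifting $\phi_H$ to $\phi'(s,t)=\phi_H(s)$ and bounding $|\phi'(s,t)|/\Omega(s,t)$ through the factorization $\frac{|\phi_H(s)|}{\Omega(s,e_R)}\cdot\frac{\Omega(s,e_R)}{\Omega(s,t)}$ exactly as the paper does. The only difference is cosmetic: you make the splitting $\phi(s,t)=\phi_H(s)+\phi_R(t)$ explicit, whereas the paper merely observes that a nonzero $\phi$ must have a nonzero restriction to $H$ or $R$.
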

\begin{proof}
If $\phi$: $H\times R \to \bC$ is a nonzero continuous group homomorphism, then either $\phi|_{H}$ or $\phi|_{R}$ is nonzero. If (\ref{phi}) holds for $G = H\times R$, then it holds for $G=H$ and for $G=R$. Thus at least one of $L^1(H, \omega_H)$ and $L^1(R, \omega_R)$ is not weakly amenable if $L^1(H\times R, \omega)$ is not weakly amenable. This shows the first assertion of the theorem.
 
For the second assertion, suppose that (\ref{G1}) is the case (the proof for the other case is similar). Then there is nonzero continuous group homomorphism $\phi$: $G=H \to \bC$ such that (\ref{phi}) holds. Let $\phi'(s,t) = \phi(s)$ ($s\in H$, $t\in R$). $\phi'$ is a nonzero continuous group homomorphism from $H\times R$ to $\bC$ and 
\[ \frac{|\phi'(s,t)|}{\Omega(s,t)} = \frac{|\phi(s)|}{\Omega(s,e_R)}\frac{\Omega(s,e_R)}{\Omega(s,t)} \leq l \frac{|\phi(s)|}{\omega_H(s)\omega_H(s^{-1})}, \]
where $l$ is a constant such that $\sup_{(s,t)\in H\times R} \frac{\Omega(s,e_R)}{\Omega(s,t)} \leq l$.
So (\ref{phi}) holds for $\phi'$ and $G = H\times R$. Therefore, $L^1(H\times R, \omega)$ is not weakly amenable from Theorem~\ref{weak}.

% if one of $L^1(H, \omega_H)$ and $L^1(R, \omega_R)$, say $L^1(H, \omega_H)$, is not weakly amenable, then there is nonzero continuous group homomorphism $\phi$: $G=H \to \bC$ such that (\ref{phi}) holds. Let $\phi'(s,t) = \phi(s)$ ($s\in H$ $t\in R$). $\phi'$ is a nonzero continuous group homomorphism from $H\times R$ to $\bC$. Evidently, (\ref{phi}) holds for $\phi'$ on $H\times R$. So $L^1(H\times R, \omega)$ is not weakly amenable.
\end{proof}

\begin{exx}
Consider the polynomial weight $\omega(s,t) = (1+|s|+|t|)^\al$ on $\bR^2 = \bR \times \bR$ ($\al>0$). Then $\omega_H = \omega_R = \omega_\al$. From Corollary~\ref{cor R} we see $L^1(\bR,\omega_\al)$ is weakly amenable if and only if $\al < 1/2$. Since $\omega(s,0)\leq \omega(s,t)$ for $s,t\in \bR$, we have $\Omega(s,0)/\Omega(s,t) \leq 1$. Therefore, the inequality in (\ref{G1}) (and (\ref{G2})) of Theorem~\ref{product} holds. From Theorem~\ref{product} we immediately derive that $L^1(\bR^2,\omega)$ is weakly amenable if and only if $\al <1/2$.
\end{exx}

We now discuss some consequences of Theorem~\ref{product}.

\begin{cor}
Let $\omega$ be a weight on the the additive group $\bR^n$. Denote $e_i = (0,\cdots,1,0,\cdots, 0)$ ($i=1,2,\cdots,n$), where $1$ appears only at $i$-th coordinate, and let $\omega_i$ be the weight on $\bR$ defined by $\omega_i(t) = \omega(te_i)$ ($t\in \bR$). If
\[
\liminf_{n\to \infty} \frac{\omega_i(n)\omega_i(-n)}{n} = 0
\]
for all $i=1,2,\cdots, n$, then  $L^1(\bR^n,\omega)$ is weakly amenable.
\end{cor}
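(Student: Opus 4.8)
The plan is to proceed by induction on the dimension $n$, using the first assertion of Theorem~\ref{product} for the inductive step and Corollary~\ref{cor R} as the base case. For $n=1$ the hypothesis reads exactly as condition (\ref{limit n}) of Corollary~\ref{cor R}, which is equivalent there to the weak amenability of $L^1(\bR,\omega_1)$, so there is nothing to prove.

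For the inductive step I would write $\bR^n = \bR \times \bR^{n-1}$, taking $H=\bR$ to be the first coordinate axis and $R=\bR^{n-1}$ the span of the remaining coordinates. In the notation of Theorem~\ref{product}, the restricted weight $\omega_H=\omega|_H$ is precisely $\omega_1$, while $\omega_R=\omega|_R$ is a weight on $\bR^{n-1}$ whose $i$-th coordinate restriction is $\omega_{i+1}$ for $i=1,\dots,n-1$. The one step needing care is checking that the hypothesis is inherited by both factors: the condition $\liminf_{m\to\infty}\frac{\omega_1(m)\omega_1(-m)}{m}=0$ gives weak amenability of $L^1(\bR,\omega_1)$ by Corollary~\ref{cor R}, and since $(\omega_R)_i=\omega_{i+1}$ satisfies the same $\liminf$ condition for every $i$, the induction hypothesis yields weak amenability of $L^1(\bR^{n-1},\omega_R)$. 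The first assertion of Theorem~\ref{product} then gives that $L^1(\bR\times\bR^{n-1},\omega)=L^1(\bR^n,\omega)$ is weakly amenable, closing the induction.

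Alternatively, one can argue in one stroke from Theorem~\ref{real homo}. Every nonzero continuous group homomorphism $\phi\colon\bR^n\to\bR$ is $\bR$-linear, say $\phi(x)=\sum_{i=1}^n c_i x_i$, and some coefficient $c_{i_0}$ is nonzero. Restricting attention to the points $t=me_{i_0}$ with $m\in\bN$, where $\omega(me_{i_0})=\omega_{i_0}(m)$ and $\omega(-me_{i_0})=\omega_{i_0}(-m)$, we obtain
\[ \sup_{t\in\bR^n}\frac{|\phi(t)|}{\omega(t)\omega(-t)} \geq \sup_{m\in\bN}\frac{|c_{i_0}|\,m}{\omega_{i_0}(m)\omega_{i_0}(-m)} = \infty, \]
the last equality holding because $\liminf_{m\to\infty}\frac{\omega_{i_0}(m)\omega_{i_0}(-m)}{m}=0$. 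Hence no nontrivial $\phi$ can satisfy (\ref{phi}), and Theorem~\ref{real homo} delivers the conclusion directly. I expect the only genuine subtlety to be the bookkeeping that identifies the coordinate restrictions of $\omega_R$ with $\omega_{i+1}$ in the inductive argument (or, in the direct argument, the invocation of linearity of continuous homomorphisms and the reduction to a single axis); neither step poses a real obstacle.
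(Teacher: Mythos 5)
Your proposal is correct, and your inductive argument is exactly the paper's proof made explicit: the paper disposes of this corollary in one line, citing Corollary~\ref{cor R}(\ref{limit n}) to get weak amenability of each $L^1(\bR,\omega_i)$ and then saying ``Theorem~\ref{product} applies,'' which tacitly means the induction you spell out, using only the first assertion of Theorem~\ref{product} (no supremum conditions needed in that direction). The one bookkeeping point you flag --- that the coordinate restrictions of $\omega_R=\omega|_{\bR^{n-1}}$ are $(\omega_R)_i=\omega_{i+1}$, since $\omega_R(te_i)=\omega(te_{i+1})$ --- is verified correctly, so the inheritance of the hypothesis by both factors is fine. Your alternative one-stroke argument via Theorem~\ref{real homo} is also sound and is a genuine variant: every continuous homomorphism $\phi\colon\bR^n\to\bR$ is linear, a nonzero one has some $c_{i_0}\neq 0$, and restricting to the axis $t=me_{i_0}$ together with $\liminf_{m\to\infty}\omega_{i_0}(m)\omega_{i_0}(-m)/m=0$ forces the supremum in (\ref{phi}) to be infinite, so no nontrivial $\phi$ can satisfy (\ref{phi}). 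In effect this inlines the idea behind the first assertion of Theorem~\ref{product} (a homomorphism on a product restricts nontrivially to some factor), buying a self-contained proof with no induction, at the cost of using the special structure of $\bR^n$ (linearity of continuous additive maps); the paper's route through Theorem~\ref{product} is instead formulated so as to apply to products of arbitrary locally compact Abelian groups. Either argument suffices.
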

\proof  From  Corollary~\ref{cor R}(\ref{limit n}) $L^1(\bR,\omega_i)$ is weakly amenable for each $i$. Then Theorem~\ref{product}  applies. \qed

\begin{cor}
Let $G_1$ and $G_2$ be two locally compact Abelian groups and let $\omega_1$ and $\omega_2$ be weights on them, respectively. Then the (projective) tensor product algebra $L^1(G_1,\omega_1)\hat\otimes L^1(G_2, \omega_2)$ is weakly amenable if and only if both $L^1(G_1,\omega_1)$ and  $L^1(G_2, \omega_2)$ are weakly amenable.
\end{cor}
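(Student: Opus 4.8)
The plan is to reduce the statement to Theorem~\ref{product} by identifying the tensor product with a Beurling algebra on the product group. The standard isometric isomorphism $L^1(G_i,\omega_i)\cong L^1(G_i)$ given by $f\mapsto f\omega_i$, combined with the identification of the projective tensor product $L^1(G_1)\tensor L^1(G_2)$ with the $L^1$-space of the product measure, yields an isometric algebra isomorphism
\[
L^1(G_1,\omega_1)\tensor L^1(G_2,\omega_2)\cong L^1(G_1\times G_2,\omega),
\qquad \omega(s,t)=\omega_1(s)\omega_2(t).
\]
First I would verify that $\omega$ so defined is indeed a weight on $H\times R$ (writing $H=G_1$, $R=G_2$) and that the elementary-tensor convolution $(f_1\otimes f_2)*(g_1\otimes g_2)=(f_1*g_1)\otimes(f_2*g_2)$ corresponds to convolution on $H\times R$, so that the displayed isomorphism respects the algebra structure. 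Once this identification is in place the corollary becomes a statement about $L^1(H\times R,\omega)$, to which Theorem~\ref{product} applies directly.

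Next I would record the behaviour of the restricted weights and of the symmetrization under this product structure. The restriction $\omega_H=\omega|_{H}$ satisfies $\omega_H(s)=\omega_1(s)\omega_2(e_R)$, i.e. $\omega_H$ is a positive constant multiple of $\omega_1$, and likewise $\omega_R=\omega_1(e_H)\omega_2$. Since scaling a weight by a constant $c\ge 1$ leaves the underlying algebra unchanged (the norms are proportional) and does not affect condition (\ref{phi}), $L^1(H,\omega_H)$ is weakly amenable exactly when $L^1(G_1,\omega_1)$ is, and similarly for $R$. Moreover the symmetrization factorizes: $\Omega(s,t)=\Omega_1(s)\Omega_2(t)$, where $\Omega_i(u)=\omega_i(u)\omega_i(u^{-1})$.

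For the ``if'' direction, if both $L^1(G_1,\omega_1)$ and $L^1(G_2,\omega_2)$ are weakly amenable, then so are $L^1(H,\omega_H)$ and $L^1(R,\omega_R)$, and the first assertion of Theorem~\ref{product} gives weak amenability of $L^1(H\times R,\omega)$, hence of the tensor product. For the converse, suppose (say) that $L^1(G_1,\omega_1)$ is not weakly amenable, so that $L^1(H,\omega_H)$ is not weakly amenable. Using $\Omega(s,t)=\Omega_1(s)\Omega_2(t)$ together with the submultiplicativity bound $\Omega_2(t)=\omega_2(t)\omega_2(t^{-1})\ge\omega_2(e_R)$, I would compute
\[
\frac{\Omega(s,e_R)}{\Omega(s,t)}=\frac{\omega_2(e_R)^2}{\Omega_2(t)}\le\omega_2(e_R)<\infty,
\]
so condition (\ref{G1}) of Theorem~\ref{product} holds and $L^1(H\times R,\omega)$ fails to be weakly amenable. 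The case where $L^1(G_2,\omega_2)$ fails is symmetric, using condition (\ref{G2}).

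The main obstacle is the first paragraph: carefully checking that the well-known isometric isomorphism of the projective tensor product of two $L^1$-Beurling algebras with the Beurling algebra of the product group (equipped with the product weight) is an isomorphism of \emph{Banach algebras}, i.e. that it carries convolution correctly. Everything after that is a routine application of Theorem~\ref{product} together with the factorization of the symmetrization, which makes the two boundedness hypotheses (\ref{G1}) and (\ref{G2}) automatic.
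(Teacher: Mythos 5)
Your proposal is correct and follows essentially the same route as the paper: the paper's proof simply identifies $L^1(G_1,\omega_1)\hat\otimes L^1(G_2,\omega_2)$ with $L^1(G_1\times G_2,\omega_1\times\omega_2)$ and observes that the boundedness conditions (\ref{G1}) and (\ref{G2}) of Theorem~\ref{product} hold ``evidently,'' which is exactly what you verify via the factorization $\Omega(s,t)=\Omega_1(s)\Omega_2(t)$ and the bound $\Omega_2(t)\geq\omega_2(e_R)$. Your extra care about the restricted weights being constant multiples $\omega_2(e_R)\omega_1$ and $\omega_1(e_H)\omega_2$ (harmless for weak amenability and for condition (\ref{phi})) is a detail the paper leaves implicit, but it does not change the argument.
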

\begin{proof}
$L^1(G_1,\omega_1)\hat\otimes L^1(G_2, \omega_2) \cong L^1(G_1\times G_2,\omega_1\times \omega_2)$. For $H= G_1$, $R=G_2$ and $\omega= \omega_1\times \omega_2$ the inequalities in (\ref{G1}) and (\ref{G2}) of Theorem~\ref{product} hold evidently. Thus, the result follows from Theorem~\ref{product}.
\end{proof}

\begin{cor}\label{product of n}
Let $G_1$, $G_2$, \ldots, $G_k$ be $k$ locally compact Abelian groups. Suppose that $\omega$ is a weight on $G_1\times G_2\times \cdots \times G_k$ and there is a constant $r >0$ such that for each $i = 1,2,\cdots, k$
\[  \omega(T_i) \leq r \omega(t_1,t_2,\cdots,t_k) \quad (t_j\in G_j,\, j=1,2,\cdots,k),  \]
where $T_i$ represents for the element of $G_1\times G_2\times \cdots \times G_k$ whose $i$-th coordinate is $t_i$ and each of the other coordinates is the unit element of the corresponding component group. Then $L^1(G_1\times G_2\times \cdots \times G_k, \omega)$ is weakly amenable if and only if all $L^1(G_i, \omega_{G_i})$ ($i= 1,2,\cdots,k$) are weakly amenable. 
\end{cor}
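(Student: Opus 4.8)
The plan is to reduce everything to the two-factor situation of Theorem~\ref{product} and induct on $k$. Throughout I write the full group as $G = G_1\times\cdots\times G_k$ and observe that for each $i$ the weight $\omega_{G_i}$ is exactly the restriction of $\omega$ to the coordinate subgroup $\{(e,\ldots,t_i,\ldots,e): t_i\in G_i\}$; this identification is what lets the inductive bookkeeping go through.

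For the sufficiency (all factors weakly amenable $\Rightarrow$ $G$ weakly amenable) I would induct on $k$ using \emph{only} the first assertion of Theorem~\ref{product}, with no appeal to the constant $r$. The base case $k=1$ is trivial. For the inductive step, set $H = G_1\times\cdots\times G_{k-1}$ and $R = G_k$, with $\omega_H = \omega|_H$ and $\omega_R = \omega|_R = \omega_{G_k}$. The coordinate subgroups of $H$ are precisely $G_1,\ldots,G_{k-1}$, and the restriction of $\omega_H$ to each of them coincides with $\omega_{G_i}$; hence the inductive hypothesis gives that $L^1(H,\omega_H)$ is weakly amenable. Since $L^1(R,\omega_R)=L^1(G_k,\omega_{G_k})$ is weakly amenable by assumption, the first assertion of Theorem~\ref{product} yields that $L^1(H\times R,\omega)=L^1(G,\omega)$ is weakly amenable.

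For the necessity I would argue by contraposition and use the converse half of Theorem~\ref{product}. Suppose some factor $L^1(G_{i_0},\omega_{G_{i_0}})$ is not weakly amenable. Write $G = H\times R$ with $R = G_{i_0}$ and $H = \prod_{i\neq i_0} G_i$, so that $\omega_R=\omega_{G_{i_0}}$ and $L^1(R,\omega_R)$ is not weakly amenable. It then remains to check the hypothesis $\sup_{(s,t)}\Omega(e_H,t)/\Omega(s,t)<\infty$ of condition~(\ref{G2}) in Theorem~\ref{product}. This is exactly where the constant $r$ enters: the element $(e_H,t)$ is the element $T_{i_0}$ whose $i_0$-th coordinate is $t$ and whose remaining coordinates are units, so the standing hypothesis gives $\omega(e_H,t)=\omega(T_{i_0})\leq r\,\omega(s,t)$ and, applied to the inverse element, $\omega(e_H,t^{-1})\leq r\,\omega(s^{-1},t^{-1})$ for all $s\in H$ and $t\in R$. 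Multiplying these two inequalities gives $\Omega(e_H,t)\leq r^2\,\Omega(s,t)$, so the supremum is bounded by $r^2$. Condition~(\ref{G2}) of Theorem~\ref{product} then shows $L^1(G,\omega)$ is not weakly amenable, which is the desired contrapositive.

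The only genuinely substantive step is the verification of the $\Omega$-domination inequality from the coordinate bound; everything else is routine bookkeeping about how restrictions of $\omega$ to coordinate subgroups behave under the product decomposition. I expect the main obstacle, such as it is, to be purely notational: keeping straight the identification of $(e_H,t)$ with $T_{i_0}$ and ensuring the constant $r$ is applied to both $t$ and $t^{-1}$ so that the symmetrization $\Omega$ correctly picks up the factor $r^2$.
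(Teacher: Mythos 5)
Your proof is correct, and it is in essence the expansion of the paper's own proof, which consists of the single sentence ``Simply apply induction and use Theorem~\ref{product}.'' One point in your write-up deserves emphasis, because it is exactly where a careless reading of that one-line proof would go wrong: in the necessity direction you isolate the suspect factor $G_{i_0}$ as $R$ and lump \emph{all} remaining factors into $H$, so that the hypothesis needed for condition~(\ref{G2}) is precisely $\Omega(e_H,t)\leq r^2\,\Omega(s,t)$, which follows from the standing coordinate bound applied to the element and to its inverse. Had one instead tried to peel factors off one at a time (the most naive induction), the step removing $G_k$ with a bad factor $G_{i_0}$, $i_0<k$, would require condition~(\ref{G1}) with $H=G_1\times\cdots\times G_{k-1}$, i.e.\ a bound on $\Omega(s,e_R)/\Omega(s,t)$ for \emph{all} $s\in H$; the standing hypothesis only controls $\omega$ on the coordinate subgroups, and submultiplicativity yields merely $\omega(s,e_R)\leq r^{k-1}\omega(s,t)^{k-1}$, which is not the linear bound required. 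So your organization --- induction using only the first (unconditioned) assertion of Theorem~\ref{product} for sufficiency, and one application of condition~(\ref{G2}) per factor for necessity --- is not mere bookkeeping; it is the correct way to make the paper's suggested argument close.
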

\begin{proof}
Simply apply induction and use Theorem~\ref{product}.
\end{proof}
If $G$ is a compactly generated locally compact Abelian group then it is topologically isomorphic with $\bR^p\times \bZ^q\times F$ for some integers $p\geq 0 , q\geq 0$ and some compact group $F$. We may write such a group as $G=G_1\times G_2\times \cdots \times G_k\times F$, where $G_i$ is either $\bR$ or $\bZ$ ($i= 1,\cdots,k$). Denote $G_i^+ = \{t\in G_i: t\geq 0\}$. We note that, for any compact group $F$, $L^1(F,\omega)$ is isomorphic with $L^1(F)$ and hence is weakly amenable for any weight $\omega$.

\begin{cor}\label{abs.}
Let  $G=G_1\times G_2\times \cdots \times G_k\times F$ be a locally compact, compact generated Abelian group, where $G_i$ is either $\bR$ or $\bZ$ ($i= 1,\cdots,k$) and $F$ is a compact group. Let $\omega$ be a weight on $G$ which can be written in the form
\[ \omega(t_1, t_2, \cdots, t_k, s) = w(|t_1|, |t_2|, \cdots, |t_k|, s), \quad ((t_1, t_2, \cdots, t_k, s)\in G), \]
 where $w(x_1,x_2,\cdots,x_k,s)$ is a function on $G_1^+\times G_2^+\times \cdots \times G_k^+\times F$ which is increasing in each $x_i$ ($i=1,2,\cdots,k$). Then $L^1(G, \omega)$ is weakly amenable if and only if all $L^1(G_i, \omega_{G_i})$ ($i = 1, 2, \cdots, k$) are weakly amenable.
\end{cor}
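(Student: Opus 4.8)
The plan is to deduce this from Corollary~\ref{product of n} by regarding the compact factor $F$ as one more component group, setting $G_{k+1} := F$, so that $G = G_1 \times \cdots \times G_k \times G_{k+1}$ is a product of $k+1$ locally compact Abelian groups (recall that $F$ is compact Abelian). With this relabelling, the only thing left to check is the uniform domination hypothesis of Corollary~\ref{product of n}: that there is a constant $r>0$ with $\omega(T_i) \le r\,\omega(t_1,\dots,t_k,s)$ for every $i=1,\dots,k+1$ and all $(t_1,\dots,t_k,s)\in G$, where $T_i$ carries $t_i$ (or $s$, when $i=k+1$) in its $i$-th slot and the identity elsewhere. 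Once this is verified, Corollary~\ref{product of n} gives that $L^1(G,\omega)$ is weakly amenable if and only if each of $L^1(G_1,\omega_{G_1}),\dots,L^1(G_k,\omega_{G_k})$ and $L^1(F,\omega_F)$ is weakly amenable; since $L^1(F,\omega_F)\cong L^1(F)$ is always weakly amenable (as noted just above the statement), the factor coming from $F$ drops out and we are left exactly with the asserted condition.

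The verification rests on three ingredients. First, compactness of $F$: because $\omega$ is continuous and strictly positive, $\omega_F = \omega|_F$ attains a finite maximum $c := \max_{s\in F}\omega_F(s)$, and $c \ge \omega_F(e_F) = \omega(e_G) \ge 1$. Second, submultiplicativity, which decouples the $F$-coordinate from the others: writing $(t_1,\dots,t_k,s)$ as the product of $(t_1,\dots,t_k,e_F)$ and $(e,\dots,e,s)$ (and conversely) yields the two-sided estimate $c^{-1}\omega(t_1,\dots,t_k,e_F) \le \omega(t_1,\dots,t_k,s) \le c\,\omega(t_1,\dots,t_k,e_F)$. Third, the monotonicity of $w$: setting the coordinates other than the $i$-th to $0$ can only decrease the value, so for $i\le k$ we have $\omega(T_i) = w(0,\dots,|t_i|,\dots,0,e_F) \le w(|t_1|,\dots,|t_k|,e_F) = \omega(t_1,\dots,t_k,e_F)$.

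Combining these, for $i\le k$ I obtain $\omega(T_i)\le \omega(t_1,\dots,t_k,e_F)\le c\,\omega(t_1,\dots,t_k,s)$, while for $i=k+1$, using $\omega(T_{k+1})=\omega_F(s)\le c$ together with the lower bound $\omega(t_1,\dots,t_k,s)\ge c^{-1}\omega(t_1,\dots,t_k,e_F)\ge c^{-1}\omega(e_G)\ge c^{-1}$, I get $\omega(T_{k+1})\le c \le c^2\,\omega(t_1,\dots,t_k,s)$. Hence $r = c^2$ works, Corollary~\ref{product of n} applies, and the proof is complete.

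I do not expect a genuine obstacle here, as the argument is essentially bookkeeping on top of Corollary~\ref{product of n}. The one point requiring care — and the only place where the compactness of $F$ is truly used — is the $F$-coordinate, which carries no order structure and so cannot be handled by monotonicity of $w$; the two-sided submultiplicative estimate is precisely what substitutes for monotonicity in that coordinate and simultaneously furnishes the lower bound on $\omega$ needed to dominate $\omega(T_{k+1})$.
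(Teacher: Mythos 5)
Your proposal is correct and is essentially the paper's own proof: the paper likewise deduces the statement from Corollary~\ref{product of n} by treating $F$ as a $(k+1)$-st factor, asserting only that the domination condition is ``readily checked'' with $r=\sup_{s\in F}\omega(0,\dots,0,s)$, and your argument supplies exactly that verification (your constant $c^{2}$ is a harmless overshoot of the paper's $c$). One small correction to your closing remark: the case $i=k+1$ is in fact handled by monotonicity alone, since $\omega(T_{k+1})=w(0,\dots,0,s)\le w(|t_1|,\dots,|t_k|,s)=\omega(t_1,\dots,t_k,s)$, so $r=1$ suffices there; compactness of $F$ is genuinely needed in the cases $i\le k$, where the $F$-coordinate changes from $e_F$ to $s$ --- which is precisely where your own estimate invokes $c$.
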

\begin{proof}
It is readily checked that the condition of Corollary~\ref{product of n} is fulfilled with $r = \sup_{s\in F}\omega(0,0,\cdots, 0, s)$.
\end{proof}

\begin{remark}
In particular, if $\omega$ is a polynomial weight, then Corollary~\ref{abs.} gives \cite[Theorem~7.1(i)]{SAM}.
\end{remark}

\section{$2$-weak amenability}\label{sect 2-WA}

Let $G$ be a locally compact group and let $\omega$ be a weight on $G$. Define
\[  \ho(t) = \limsup_{s\to\infty}\frac{\omega(ts)}{\omega(s)}: = \inf_{K}\sup_{s\in G\setminus K}\frac{\omega(ts)}{\omega(s)}, \]
where the infimum is taken over all compact subsets of $G$. The function $\ho$ is not guaranteed to be continuous although it is indeed submultiplicative. It is even not clear whether $\ho$ is a measurable function on $G$. But we will not use the measurability of $\ho$ in our argument.

\begin{thm}\label{2-w}
Let  $G$ be an locally compact Abelian group and let $\omega$ be a weight on $G$. If there is a constant $m>0$ such that
\begin{equation}\label{hat omega}
  \liminf_{n\to\infty}\frac{\omega(t^n)\ho(t^{-n})}{n} \leq m \quad (t\in G),  
\end{equation}
then $\L1o$ is $2$-weakly amenable. 
\end{thm}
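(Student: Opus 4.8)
The plan is to prove the stronger statement that every continuous derivation $D\colon\L1o\to\L1o^{**}$ is zero. This suffices: since $G$ is Abelian, $\L1o$ is commutative, so $\L1o^{*}$ is a symmetric bimodule and hence so is $\L1o^{**}$; consequently every inner derivation into $\L1o^{**}$ vanishes, and $2$-weak amenability is equivalent to $D=0$. The first move is to force $D$ to take values in $\C0o^{\perp}$. Let $q\colon\L1o^{**}\to\C0o^{*}=\Mo$ be the adjoint of the module inclusion $\C0o\hookrightarrow\Lio$; it is a module map, so $q\circ D\colon\L1o\to\Mo$ is again a derivation. The module action of $\L1o$ on $\Mo$ is by convolution, and $\L1o$ is an ideal of its multiplier algebra $\Mo$, so both summands of $(q\circ D)(a*b)=a\cdot(q\circ D)(b)+(q\circ D)(a)\cdot b$ lie in $\L1o$; Cohen factorization then gives $(q\circ D)(\L1o)\subseteq\L1o$. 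Thus $q\circ D$ is a continuous derivation of the commutative semisimple algebra $\L1o$ into itself, which is zero by the Singer--Wermer theorem; hence $D(\L1o)\subseteq\C0o^{\perp}$.

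Next I would extend $D$ to a derivation $\overline D\colon\Mo\to\L1o^{**}$ (possible because $\L1o$ has a bounded approximate identity, exactly as in the proof of Theorem~\ref{weak}). Since $\C0o^{\perp}$ is weak*-closed, $\overline D(\dt)\in\C0o^{\perp}$ for all $t$. Setting $\Delta(t)=\dtm\cdot\overline D(\dt)\in\C0o^{\perp}$, the same computation as for $(\ref{Del})$ shows $\Delta$ is additive, so $\Delta(t^{n})=n\Delta(t)=\dtmn\cdot\overline D(\dtn)$. The crux is to bound the point-mass action on $\C0o^{\perp}$ by $\ho$ rather than by $\omega$: the left multiplier $\ell_{\dtmn}$ on $\L1o^{**}$ is the adjoint of the translation $L_{t^{-n}}$ on $\Lio$, where $(L_{g}f)(y)=f(gy)$; on the quotient $\Lio/\C0o$ only the behaviour of $f/\omega$ at infinity survives, and the substitution $z=gw$ gives $\|L_{g}\|_{\Lio/\C0o}\le\limsup_{w\to\infty}\omega(gw)/\omega(w)=\ho(g)$, whence $\|\ell_{\dtmn}|_{\C0o^{\perp}}\|\le\ho(t^{-n})$. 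Combined with $\|\overline D(\dtn)\|\le\|D\|\,\omega(t^{n})$ this yields
\[
n\|\Delta(t)\|=\|\Delta(t^{n})\|\le\|D\|\,\omega(t^{n})\ho(t^{-n}),
\]
so $\|\Delta(t)\|\le\|D\|\,\omega(t^{n})\ho(t^{-n})/n$ for every $n$ and $t$. Taking the liminf and invoking $(\ref{hat omega})$ gives the uniform bound $\|\Delta(t)\|\le m\|D\|$ for all $t\in G$. Applying this to $t^{n}$ and using additivity once more, $n\|\Delta(t)\|=\|\Delta(t^{n})\|\le m\|D\|$, which forces $\Delta(t)=0$; hence $\overline D(\dt)=\dt\cdot\Delta(t)=0$ for every $t$.

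It remains to pass from $\overline D(\dt)=0$ to $D=0$. For $a,\phi\in\L1o$ write $a*\phi=\int_{G}a(t)\,\dt*\phi\,dt$ as a Bochner integral in $\L1o$; applying the bounded map $D$ and using $\overline D(\dt*\phi)=\dt\cdot D(\phi)+\overline D(\dt)\cdot\phi=\dt\cdot D(\phi)$ gives $D(a*\phi)=a\cdot D(\phi)$ (the last equality by testing against $\Lio$). Comparison with $D(a*\phi)=a\cdot D(\phi)+D(a)\cdot\phi$ forces $D(a)\cdot\phi=0$ for all $\phi$, i.e. $D(\L1o)\subseteq\RUCo^{\perp}$, since $\RUCo=\L1o\cdot\Lio$. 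But $\L1o\cdot\Lio=\Lio\cdot\L1o=\RUCo$ makes $\RUCo^{\perp}$ a trivial $\L1o$-bimodule, so $D(a*b)=a\cdot D(b)+D(a)\cdot b=0$; a final Cohen factorization gives $D=0$, as required.

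The step I expect to be the main obstacle is the refined estimate $\|\ell_{\dtmn}|_{\C0o^{\perp}}\|\le\ho(t^{-n})$ together with the reduction that places $\overline D(\dt)$ inside $\C0o^{\perp}$. It is precisely this passage to the \emph{part at infinity}, where the point-mass action is controlled by $\ho$ instead of $\omega$, that produces the weaker hypothesis $(\ref{hat omega})$ and distinguishes $2$-weak amenability from weak amenability; without first discarding the $\Mo$-valued part of $D$ (via Singer--Wermer) one is stuck with the factor $\omega(t^{-n})$ in place of $\ho(t^{-n})$.
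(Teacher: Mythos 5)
Your proposal follows the paper's skeleton (Singer--Wermer reduction to $\C0o^{\perp}$, extension to point masses, additivity of $\Delta(t)=\dtm\cdot D(\dt)$, the $\ho$--bound, then division by $n$), but the step you yourself flag as the crux is false as stated. For non-discrete $G$ the estimate $\|\ell_{\dt}|_{\C0o^{\perp}}\|\le\ho(t)$ --- equivalently, that the translation induced on $X=\Lio/\C0o$ has norm at most $\ho(t)$ --- fails badly. The reason is that $\C0o$ consists of (classes of) \emph{continuous} functions, so the quotient $X$ does not record only behaviour at infinity: a compactly supported discontinuous function has nonzero class, and translating it moves its jump to a region where $\omega$ is small, which inflates the quotient norm by a factor of order $\omega(t)$, not $\ho(t)$. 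Concretely, take $G=\bR$, $\omega(t)=(1+|t|)^{1/2}$, so that $\ho\equiv 1$, and let $f=\omega(g)\chi_{[g,g+1]}$ with $g>0$. Then $\|f\mon\le 1$, so $\|[f]\|_{X}\le 1$; but $L_{g}f=\omega(g)\chi_{[0,1]}$, and since any $u$ with $u/\omega\in C_{0}(\bR)$ is continuous at $0$ while $(L_{g}f)/\omega$ jumps there from $0$ to $\omega(g)$, one gets $\|[L_{g}f]\|_{X}\ge \omega(g)/2\to\infty$, whereas $\ho(g)\|[f]\|_{X}\le 1$. So the inequality $n\|\Delta(t)\|\le\|D\|\,\omega(t^{n})\ho(t^{-n})$ is not justified, and the argument collapses. (Your substitution ``$z=gw$'' implicitly assumes the quotient only sees $f/\omega$ at infinity; that is true for discrete groups, where finitely supported functions lie in $\C0o$, but not in general.)

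What is missing is precisely the decomposition the paper performs before estimating. With $E$ a weak* cluster point of a bounded approximate identity, one writes $X^{*}=r_{E}(X^{*})\oplus(I-r_{E})(X^{*})$, identifies $(I-r_{E})(X^{*})\cong(X/\Ao X)^{*}$, on which the module action is trivial, so that component of $D$ vanishes by \cite[Proposition~1.5]{JOH}; hence $D$ maps into $r_{E}(X^{*})\cong(\Ao X)^{*}$, where $\Ao X=\RUCo/\C0o$. On $\RUCo/\C0o$ your estimate \emph{is} correct, exactly because representatives there are continuous: given $x$ and a compact $K$ one may pass to a representative vanishing on $K$ (subtract $f_{0}h\in C_{00}(G)\subset\C0o$), after which only the ratio $\omega(ts)/\omega(s)$ outside $t^{-1}K$ enters, giving $\|x\cdot\dt\|\le\ho(t)\|x\|$ and, dually, the desired bound on $(\Ao X)^{*}$. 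The same reduction also repairs your extension step, which as written is unjustified: a derivation into $\L1o^{**}$, or into $X^{*}$, does not extend to $\Mo$ merely because $\L1o$ has a bounded approximate identity --- Johnson's extension theorem requires the target to be the dual of a \emph{neo-unital} module, which $\Lio$ and $X$ are not, but $\Ao X$ is. Your opening reduction ($q\circ D=0$ by Singer--Wermer, so $D(\L1o)\subseteq\C0o^{\perp}$) is fine and matches what the paper imports from \cite{D-L}; your closing step would also need repair, since pairing the weak* integral $f\cdot a=\int a(t)\,f\cdot\dt\,dt$ with an arbitrary element of $\L1o^{**}$ is not a legitimate interchange --- the paper instead uses $so$-weak* continuity of $D$ together with Lemma~\ref{so dense}.
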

\begin{proof}
As in \cite{D-L}, we use $\Ao$ to denote $\L1o$. Suppose that $D$: $\Ao \to \Ao\ds$ is a continuous derivation. We aim to show that $D = 0$. 

It was proved in \cite{D-L} that, as a consequence of the Singer-Wermer theorem, $D(\L1o)\subset \C0o^\perp$. Here $\C0o$ is regarded as a closed submodule of the $\Ao$-bimodule $\Lio$. Denote the quotient module $\Lio/\C0o$ by $X$. Then $\C0o^\bot \cong X^*$ as a Banach $\Ao$-bimodules. Let $(e_\al)$ be a bounded approximate identity of $\Ao$ and let $E$ be a weak* cluster point of $(e_\al)$ in $\Ao\ds$. We then have
\[ X^* = r_E(X^*)\oplus (I-r_E)(X^*), \]
where $I$ denotes the identity operator on $X^*$. 
It is evident that $r_E(X^*)\cong (\Ao X)^*$ and $(I - r_E)(X^*)\cong (X/\Ao X)^*$ as Banach $\Ao$-bimodules (see the proof  of \cite[Proposition~1.8]{JOH}). With this module decomposition the derivation $D$ may be written as the sum of two continuous derivations $D_1=r_E\circ D$: $\Ao\to r_E(X^*)$ and $D_2=(I -r_E)\circ D$: $\Ao\to (I -r_E)(X^*)$. Since the $\Ao$-module actions on $X/(\Ao X)$ is trivial, $D_2 = 0$ according to \cite[Proposition~1.5]{JOH}. Therefore $D = D_1$, that is, $D$: $\Ao\to r_E(X^*)$. Note that $X$ is a symmetric $\Ao$-bimodule. So $\Ao X = \Ao X\Ao$ is a neo-unital $\Ao$-bimodule. As is well-known, we may extend $\Ao$-module actions on $\Ao X$ to $\Mo$-bimodule actions so that $\Ao X$ becomes a unital (symmetric) $\Mo$-bimodule. Moreover, $D$ may be uniquely extended to a continuous derivation from $\Mo$ to $r_E(X^*)$. Thus, for each $t\in G$, $D(\delta_t)$ is well-defined in this sense. We show $D(\dt) = 0$ for all $t\in G$.

Note that $\Ao X = RUC(1/\omega)/\C0o$. Given any $x\in \Ao X$, any compact set $K\subset G$ and any $\vep>0$, there is $f\in RUC(1/\omega)$ such that 
\[ x = [f], \;f|_K =0 \text{ and } \|f\|_{\sup,1/\omega} \leq \|x\| + \vep, \]
 where $[f] = f + \C0o$ represents the coset of $f$ modulo $\C0o$. In fact, from the definition of the quotient norm one may choose $h\in \RUCo$ such that $x = [h]$ and $\|h\mon \leq \|x\| + \vep$. On the other hand, there is $f_0\in C_{00}(G)$ such that $0\leq f_0\leq 1$, $f_0(t) = 1$ for $t\in K$. We take $f = (1-f_0)h$. Then $[f] = [h] = x$ since $f_0h\in C_{00}(G)$. It is easily seen that $\|f\mon \leq \|h\mon \leq \|x\| + \vep$. Then for $t\in G$ we have
\begin{align*}
 \|x\cdot \dt\| &\leq \|f\cdot \dt\mon = \sup_{s\in G} \left|\frac{f(ts)}{\omega(s)}\right|\\
                 & \leq \|f\mon \sup_{s\in G\setminus (t^{-1}K)}\frac{\omega(ts)}{\omega(s)} \leq (\|x\| + \vep) \sup_{s\in G\setminus (t^{-1}K)}\frac{\omega(ts)}{\omega(s)}.
                 \end{align*}
Since $\vep >0$ and $K\subset G$ were arbitrarily given, we derive 
\[ \|x\cdot \dt \| \leq \|x\|\ho(t) \quad (x\in \Ao X,\; t\in G). \]
Then for $\Phi \in (\Ao X)^*$,
\[ |\la x, \dt\cdot \Phi\ra| = |\la x\cdot\dt, \Phi\ra| \leq \|x\| \ho(t)\|\Phi\| \quad (x\in \Ao X).  \]
This implies that $\|\dt\cdot \Phi\| \leq \ho(t) \|\Phi\|$ ($\Phi\in (\Ao X)^*$, $t\in G$).

As $G$ is commutative, for each integer $n$ we have
\[ \dtmn\cdot D(\dtn) = n \dtm \cdot D(\dt) \quad (t\in G). \]
The above discussion allows us to estimate the norm as follows.
\[\|\dtmn\cdot D(\dtn)\| \leq \ho(t^{-n}) \|D(\dtn)\| \leq \omega(t^n)\ho(t^{-n})\|D\|. \]
We then have
\[ \|\dtm \cdot D(\dt)\| = \frac{1}{n} \|\dtmn\cdot D(\dtn)\|\leq \frac{\omega(t^n)\ho(t^{-n})}{n}\|D\|. \]
From the hypothesis we immediately obtain
\[ \|\dtm \cdot D(\dt)\| \leq m\|D\| \quad (t\in G).  \]

Let $\Delta(t) = \dtm \cdot D(\dt)$ ($t\in G$), and let $B = \Delta(G)$. Then $B$ is a bounded subset of $r_E(X^*)$. However, similar to the counterpart that we showed in the proof of Theorem~\ref{weak}, $\Delta(t_1t_2) =  \Delta(t_1) + \Delta(t_2)$
for all $t_1, t_2 \in G$. As a consequence, one sees easily via induction that $\Delta(t^k) = k \Delta (t)$ ($t\in G$). Therefore
\[ \|\Delta(t)\| = \frac{1}{k} \|\Delta(t^k)\| \leq \frac{m\|D\|}{k}  \]
for all integers $k>0$. This shows that $\Delta(t) =0$. Hence $D(\dt) = 0$ for all $t\in G$. This implies that $D(u) = 0$ for $u \in span \{\dt: t\in G\}$. As a continuous derivation from $\Ao$ to $(\Ao X)^*$, $D$ is $so$-weak* continuous. Since $span\{\dt: t\in G\}$ is dense in $\Mo$ in the $so$-topology (Lemma~\ref{so dense}), We finally get $D(u) = 0$ for all $u\in \Mo$. So $D=0$. This shows that $\L1o$ is $2$-weakly amenable.

\end{proof}

\begin{exx}
Consider the additive group $\bZ^2$ and the weight $\omega$ on it defined by
\[  \omega(s, t) = (1+|s|+|t|)^\al(1 + |s+t|)^{\be}, \]
where $\al \geq 0$ and $\be \geq 0$. Then $\widehat \omega(s,t) = (1+|s+t|)^\be$ which is  unbounded if $\be>0$. However, it is readily seen that $\lim \frac{\omega(ns,nt)\widehat\omega(-ns,-nt)}{n} =0$ when $\al + 2\be < 1$. So $\ell^1(\bZ^2,\omega)$ is $2$-weakly amenable if $\al + 2\be < 1$ due to Theorem~\ref{2-w}.
\end{exx}

In general, when a weight is the product of a polynomial weight of order less than 1 and some other weight which does not increase ``too fast'', the corresponding Beurling algebra will be $2$-weakly amenable. Precisely we have the following.

\begin{cor}
Let $G = \bR^m \times \bZ^k$, where $m,k\geq 0$ are integers. Let 
\[  \omega(\bs, \bt) = (1+|\bs|+|\bt|)^\al\omega_0(\bs, \bt) \quad (\bs\in \bR^m, \bt\in \bZ^k) \]
where $0\leq \al <1$, $|\bs|$ and $|\bt|$ denote the Euclidean norm of $\bs$ and $\bt$, and $\omega_0$ is a weight on $G$ satisfying 
\[  \liminf_{n\to \infty}\frac{\omega_0(n\bs, n\bt)}{n^{(1-\al)/2}} = 0 \quad (\bs\in \bR^m, \bt\in \bZ^k).  \]
Then $L^1(G,\omega)$ is $2$-weakly amenable.
\end{cor}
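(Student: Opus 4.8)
The plan is to check that $\omega$ satisfies the sufficient condition~(\ref{hat omega}) of Theorem~\ref{2-w}. Write $p(\bs,\bt)=(1+|\bs|+|\bt|)^\al$, so that $\omega=p\,\omega_0$, and abbreviate $|(\bs,\bt)|=|\bs|+|\bt|$. The first step is to compute $\ho$ for this product weight. For fixed $x$ the triangle inequality gives $\bigl|\,|x+y|-|y|\,\bigr|\le|x|$, so that $p(x+y)/p(y)\to 1$ as $y\to\infty$; since the remaining factor $\omega_0(x+y)/\omega_0(y)$ is bounded above by $\omega_0(x)$, I would conclude
\[ \ho(x)=\limsup_{y\to\infty}\Bigl(\frac{p(x+y)}{p(y)}\cdot\frac{\omega_0(x+y)}{\omega_0(y)}\Bigr)=\widehat{\omega_0}(x)\le\omega_0(x). \]
Thus the polynomial factor drops out of $\ho$ entirely, and the whole estimate is carried by $\omega_0$.

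Next I would estimate the quantity in (\ref{hat omega}). Fix $t=(\bs,\bt)$ and put $r=|\bs|+|\bt|$, so that (in the additive group) $t^n=(n\bs,n\bt)$, $t^{-n}=(-n\bs,-n\bt)$, $\omega(t^n)=(1+nr)^\al\omega_0(n\bs,n\bt)$, and, by the first step, $\ho(t^{-n})\le\omega_0(-n\bs,-n\bt)$. Using $1+nr\le n(1+r)$ for $n\ge 1$, I would estimate
\[ \frac{\omega(t^n)\,\ho(t^{-n})}{n}\le(1+r)^\al\,\frac{\omega_0(n\bs,n\bt)\,\omega_0(-n\bs,-n\bt)}{n^{\,1-\al}}=(1+r)^\al\,\frac{\omega_0(n\bs,n\bt)}{n^{(1-\al)/2}}\cdot\frac{\omega_0(-n\bs,-n\bt)}{n^{(1-\al)/2}}. \]
Since $(1+r)^\al$ depends only on $t$, it is enough to prove that the $\liminf_n$ of the product of the two fractions on the right is $0$; then (\ref{hat omega}) holds for \emph{every} $t$ with any fixed positive constant, and Theorem~\ref{2-w} delivers the $2$-weak amenability of $L^1(G,\omega)$.

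The exponent $(1-\al)/2$ is exactly half of $1-\al$, and the hypothesis, applied to the direction $(\bs,\bt)$ and to $(-\bs,-\bt)$, guarantees that each of the two fractions tends to $0$ along a subsequence. The main obstacle is that a $\liminf$ of a product is \emph{not} controlled by the two separate $\liminf$'s, so I must produce a single sequence $n_i\to\infty$ along which both fractions are simultaneously small. This is where I expect the real work to lie, and I would attack it through submultiplicativity of $\omega_0$: the inequality $\omega_0((n+j)\bs,(n+j)\bt)\le\omega_0(n\bs,n\bt)\,\omega_0(j\bs,j\bt)$ shows that smallness of $\omega_0(n\bs,n\bt)/n^{(1-\al)/2}$ propagates, with a loss by the constant factor $\omega_0(j\bs,j\bt)$, to each neighbouring index $n+j$. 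I would exploit this propagation to align the subsequence on which the first fraction is small with the one on which the second is small, extract a common subsequence, and thereby force the product to $0$. Once this synchronisation is carried out, the verification of (\ref{hat omega}) is complete and the corollary follows from Theorem~\ref{2-w}.
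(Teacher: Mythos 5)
Your first two steps are correct and are exactly the paper's: the published proof likewise notes $\widehat\omega=\widehat\omega_0\le\omega_0$ and bounds $\omega(n\bs,n\bt)\,\widehat\omega(-n\bs,-n\bt)/n$ by $(1+|\bs|+|\bt|)^\al\,\omega_0(n\bs,n\bt)\,\omega_0(-n\bs,-n\bt)/n^{1-\al}$. The divergence comes next: the paper disposes of the remaining step with a bare ``Therefore'', i.e.\ it silently asserts the very point you isolate as the main obstacle, while you leave that point as an unexecuted plan. So your proposal is incomplete precisely where all the content lies, and unfortunately the synchronisation you propose cannot be carried out: the implication ``$\liminf_n v(n)/n^{\beta}=0$ and $\liminf_n v(-n)/n^{\beta}=0$ imply $\liminf_n v(n)v(-n)/n^{2\beta}<\infty$'' is \emph{false} for weights, so no subsequence alignment can close the gap. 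Concretely, take $G=\bZ$, $\al=0$ (so $\beta=1/2$ and $\omega=\omega_0=v$), let $M_1=2$, $M_{j+1}=M_j^{5}$, $n_{\pm}=\max(\pm n,0)$, and set
\begin{equation*}
v(n)\;=\;\prod_{j\ \mathrm{odd}}\min\Bigl(\bigl(1+\tfrac{M_{j+1}}{M_j}\bigr)^{2},\,\bigl(1+\tfrac{n_{+}}{M_j}\bigr)^{2}\Bigr)\;\cdot\;\prod_{j\ \mathrm{even}}\min\Bigl(\bigl(1+\tfrac{M_{j+1}}{M_j}\bigr)^{2},\,\bigl(1+\tfrac{n_{-}}{M_j}\bigr)^{2}\Bigr).
\end{equation*}
Each factor is a weight (a cap $\min(P,\cdot)$ of the weight $(1+n_{\pm}/M_j)^{2}$) and the product converges, so $v$ is a weight on $\bZ$. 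On the window $[M_J,M_{J+1}]$, up to bounded factors, $v(n)$ grows like $(n/M_J)^{2}$ for odd $J$ and is constant for even $J$, while $v(-n)$ does the opposite; multiplying the two lower bounds gives $v(n)v(-n)\ge (n/M_1)^{2}$ for \emph{every} $n$, whereas $v(n)/n^{1/2}\to0$ along the right ends of the even windows and $v(-n)/n^{1/2}\to0$ along those of the odd windows (and likewise along the multiples of any fixed $t\neq0$, since each such window contains multiples of $t$). Thus $\omega_0=v$ satisfies the hypothesis of the corollary, yet $v(n)v(-n)/n\to\infty$: there is \emph{no} $n$ at which both of your fractions are small, so no common subsequence exists; your propagation idea fails because the loss factor $\omega_0(j\bs,j\bt)$ is unbounded over the gaps between the two subsequences, and those gaps grow without bound.

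Two conclusions follow. First, you have put your finger on a genuine gap, and it is a gap in the paper's own proof as well: the estimate $\widehat\omega_0\le\omega_0$ throws away too much, and the intermediate claim $\liminf_n\omega_0(n\bs,n\bt)\,\omega_0(-n\bs,-n\bt)/n^{1-\al}=0$ simply does not follow from the stated hypothesis. (This does not refute the corollary itself: for the $v$ above, $\widehat\omega_0$ is bounded and $\liminf_n v(nt)/n=0$ for every $t$, so condition (\ref{hat omega}) of Theorem~\ref{2-w} holds directly and $2$-weak amenability follows --- but for a reason the written proof never invokes.) Second, any correct completion needs extra input beyond the two one-sided liminfs: either retain $\widehat\omega_0$ itself in the estimate, since it can be dramatically smaller than $\omega_0$, or add a hypothesis forcing the two fractions to coincide --- for instance, if $\omega_0$ is symmetric, $\omega_0(-x)=\omega_0(x)$, then the product is the square of a single fraction, its liminf is $0$ by hypothesis, and the argument closes; this already covers the Dales--Lau product weights mentioned in the remark following the corollary.
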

\begin{proof}
It is readily seen that $\widehat\omega = \widehat\omega_0 \leq \omega_0$. So
\[  \omega(\bs, \bt)\widehat\omega(-\bs, -\bt) \leq (1+|\bs|+|\bt|)^\al\omega_0(\bs, \bt)\omega_0(-\bs, -\bt). \]
Therefore
\[ \liminf_{n\to \infty} \frac{\omega(n\bs, n\bt)\widehat\omega(-n\bs, -n\bt) }{n} = 0 \]
for all $\bs\in \bR^m, \bt\in \bZ^k$. By Theorem~\ref{2-w}, $L^1(G,\omega)$ is $2$-weakly amenable.
\end{proof} 
 
\begin{remark}
The product weights discussed in \cite[page 168]{D-L} clearly satisfy the condition of the above corollary. So the $2$-weak amenability of the corresponding Beurling algebras also follows from this corollary.
\end{remark} 

\section{Further comments on open problems}\label{Quest}

\begin{enumerate}
\item[1.] Weak amenability of non-Abelian Beurling algebras

When $G$ is not Abelian and $\omega$ is not trivial, except for the necessary condition discussed in Remark~\ref{IN} for IN groups weak amenability of $L^1(G,\omega)$ is completely unknown. In fact, to the author's knowledge as far, in the literature  there is not even an example of weakly amenable non-Abelian Beurling algebras with a non-trivial weight. 

\item[2.] 2-Weak amenability of Beurling algebras

2-Weak amenability of $L^1(G)$ is closely related to the well-known derivation problem for $L^1(G)$ which asks whether every continuous derivation from $L^1(G)$ into $M(G)$ is inner. The problem has been solved affirmatively in general by V. Losert recently. Derivation problem for a Beurling algebra $L^1(G,\omega)$ is still open and seems not approachable by the method of Losert's. In general, we would like to know when $L^1(G,\omega)$ is 2-weakly amenable.  For Abelian groups $G$,
after our Theorem~\ref{2-w} we would like to know whether condition (\ref{hat omega}) is also necessary for $L^1(G,\omega)$ to be 2-weakly amenable.

\item[3.] Weak amenability of the center algebra of a non-abelian Beurling algebra

The center $ZL^1(G,\omega)$ of $L^1(G,\omega)$ is an Abelian Banach subalgebra of $L^1(G,\omega)$. It is well-known that $ZL^1(G,\omega)$ is not trivial if and only if $G$ is an IN group. Since $ZL^1(G,\omega)=L^1(G,\omega)$ when $G$ is abelian, Studying weak amenability of $ZL^1(G,\omega)$ may be regarded as a natural extension to the investigation of weak amenability for Abelian Beurling algebras. Even for $\omega \equiv 1$, we do not know a full answer to whether  $ZL^1(G,\omega)$ (abbreviated $ZL^1(G)$) is weakly amenable. However, it was shown in \cite[Theorem~2.4]{A-S-S} that $ZL^1(G)$  is hyper-Tauberian if each conjugacy class of $G$ is relatively compact. Consequently, $ZL^1(G)$ is weakly amenable for this type of groups $G$ \cite[Theorem~0.2(i)]{A-S-S}. In particular, $ZL^1(G)$ is weakly amenable if $G$ is compact. We conclude this paper with a simple proof to this last fact. The proof is based on the famous Peter-Weyl theorem.
\end{enumerate}

\begin{proposition}
For every compact group $G$, $ZL^1(G)$ is weakly amenable.
\end{proposition}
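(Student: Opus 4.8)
The plan is to exploit the rich supply of central idempotents that the Peter--Weyl theorem furnishes in $L^1(G)$ for a compact group $G$, together with the elementary fact that a derivation of a commutative algebra into its (necessarily symmetric) dual module annihilates every idempotent. Since $ZL^1(G)$ is commutative, its dual module $ZL^1(G)^*$ is symmetric, so every inner derivation into it vanishes; hence weak amenability of $ZL^1(G)$ is equivalent to the assertion that the only continuous derivation $D\colon ZL^1(G)\to ZL^1(G)^*$ is $D=0$. This is what I would aim to prove.

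First I would set up the structure theory. For each equivalence class $\pi$ of irreducible unitary representations of $G$, with dimension $d_\pi$ and character $\chi_\pi$, the Schur orthogonality relations give $\chi_\pi * \chi_\sigma = \delta_{\pi\sigma}\, d_\pi^{-1}\chi_\pi$, so the functions $e_\pi := d_\pi\chi_\pi$ are mutually orthogonal idempotents, i.e. $e_\pi * e_\sigma = \delta_{\pi\sigma}\,e_\pi$. As each $\chi_\pi$ is a class function, each $e_\pi$ lies in $ZL^1(G)$. The Peter--Weyl theorem guarantees that the linear span of all matrix coefficients is dense in $C(G)$, hence in $L^1(G)$; applying the contractive averaging projection $Pf(x) = \int_G f(yxy^{-1})\,dy$ of $L^1(G)$ onto $ZL^1(G)$ then shows that $\mathrm{span}\{e_\pi\} = \mathrm{span}\{\chi_\pi\}$ is dense in $ZL^1(G)$ in the $L^1$-norm.

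Next comes the one algebraic observation I need. Let $D\colon ZL^1(G)\to ZL^1(G)^*$ be a continuous derivation and let $e$ be any idempotent. Using $e*e=e$ and the symmetry of the module, $D(e)=D(e*e)= e\cdot D(e) + D(e)\cdot e = 2\,e\cdot D(e)$. Multiplying this identity by $e$ and using $e*e=e$ gives $e\cdot D(e) = 2\,e\cdot D(e)$, whence $e\cdot D(e)=0$ and therefore $D(e) = 2\,e\cdot D(e) = 0$. Thus $D$ annihilates every idempotent, in particular every $e_\pi$.

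Putting these together, $D$ vanishes on each $e_\pi$, hence by linearity on $\mathrm{span}\{e_\pi\}$, and by continuity on its closure $ZL^1(G)$; so $D=0$ and $ZL^1(G)$ is weakly amenable. The only real content here—and the step I would treat most carefully—is the Peter--Weyl input: verifying that the central idempotents $e_\pi$ genuinely span a dense subalgebra of $ZL^1(G)$ in the $L^1$-norm (rather than merely in $L^2$), for which the averaging-projection argument is the cleanest route. Everything else is formal.
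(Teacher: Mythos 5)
Your proof is correct and follows essentially the same route as the paper's: both rest on the mutually orthogonal central idempotents $e_\pi = d_\pi\chi_\pi$ supplied by the Peter--Weyl theory, with dense span in $ZL^1(G)$, together with the elementary fact that a continuous derivation into a symmetric dual module annihilates idempotents. The only difference is that you spell out the two steps the paper cites as a ``simple fact'' --- the computation $D(e)=2\,e\cdot D(e)$ forcing $D(e)=0$, and the averaging-projection argument for $L^1$-density of $\mathrm{span}\{\chi_\pi\}$ in $ZL^1(G)$ --- both of which are carried out correctly.
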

\begin{proof}
It is a simple fact that if an Abelian Banach algebra $\cA$ contains a subset $E$ of mutually annihilating idempotents (that is $e^2 = e$ for all $e\in E$, and $e_1e_2=0$ if $e_1, e_2 \in E$ and $e_1\neq e_2$) and if span$(E)$ is dense in $\cA$, then $\cA$ is weakly amenable. Indeed, when $G$ is compact $ZL^1(G)$ has such a subset $E =\{d_\pi \chi_\pi: \pi \in \widehat G\}$ (see \cite[Section 27]{HAR2}), where $\widehat G$ is the dual object of $G$, $d_\pi$ is the dimension of the associated irreducible unitary representation $\pi$, and $\chi_\pi$ is the character of the representation $\pi$. Therefore, $ZL^1(G)$ is weakly amenable if $G$ is compact.
\end{proof}

The author is grateful to the referee for drawing his attention to the hyper-Tauberian property of $ZL^1(G)$ discussed in \cite{A-S-S}. He is also grateful to N. Gr\o nb\ae k for indicating that some results in the paper match well with \cite[Theorem 3.4]{GRO 89}, which concerns with weighted semigroup algebras on the additive semigroup $\bR_+^n$.


\begin{thebibliography}{99}

\bibitem{A-S-S}
A. Azimifard, E. Samei and N. Spronk, Amenability properties of the centres of group algebras, J. Funct. Anal. 256 (2009), 1544-1564. \vskip 2mm

 
\bibitem{B-C-D} W.G. Bade, P.C. Curtis Jr. and H.G. Dales,
Amenability and weak amenability for Beurling and Lipschitz algebras, 
\emph{Proc. London Math. Soc.} {55} (1987), 359--377. \vskip2mm

\bibitem{BhDe}
S.J. Bhatt and H.V. Dedania, A Beurling algebra is semisimple: an elementary proof, Bull. Australian Math. Soc. 66 (2002), 91--93.\vskip2mm


\bibitem{C-G-Z} Y. Choi, F. Ghahramani, and Y. Zhang, Approximate and pseudo-amenability of various classes of Banach algebras, J. Funct. Anal. {256} (2009), 3158-3191. \vskip2mm


\bibitem{DAL} H.G. Dales, \emph{Banach algebras and automatic continuity}, Clarendon Press,
Oxford, (2000).\vskip2mm

\bibitem{D-G-G}
  H. G. Dales, F. Ghahramani and N. Gr\o nb\ae k, 
  \emph{Derivations into iterated duals of Banach algebras},
  Studia Math. {128} (1998), 19--54.\vskip 2mm

\bibitem{D-L}
H.G. Dales and A. T.-M. Lau, The second duals of Beurling algebras, Mem. Amer. Math. Soc. 177 n. 836 (2005).\vskip2mm

\bibitem{GAU}
G. I. Gaudry,
Multipliers of weighted Lebesgue and measure spaces, Proc. Lond, Math. Soc. 19 (1969), 327--340.\vskip2mm

\bibitem{G-Zab} { F. Ghahramani and G. Zabandan}, $2$-weak amenability of a Beurling algebra and amenability of its second dual, Int. J. Pure Appl. Math. {16} (2004) 75--86.\vskip2mm

\bibitem{GRO 89}
N. Gr\o nb\ae k,
{Commutative Banach algebras, module derivations, and semigroups}, J. London Math. Soc. {(2) 40} (1989), 137–157. 
\vskip2mm

\bibitem{GRO-weak}
N. Gr\o nb\ae k,
{A characterization of weakly amenable Banach algebras}, Studia Math.
{94} (1989), 149--162.\vskip2mm

\bibitem{GRO} {N. Gr\o nb\ae k}, { Amenability of weighted convolution algebras 
on locally compact groups}, \emph{Trans. Amer. Math. Soc.} {319} 
(1990), 765--775.\vskip 2mm

\bibitem{HAR} {E. Hewitt and K. A. Ross}, \emph{ Abstract Harmonic Analysis I}, 
2nd Ed., Springer-Verlag, New York (1979).\vskip 2mm

\bibitem{HAR2} {E. Hewitt and K. A. Ross}, \emph{ Abstract Harmonic Analysis II},  Springer-Verlag, Berlin (1970).\vskip 2mm
 
\bibitem{JOH}{B.E. Johnson},{ Cohomology in Banach algebras},
\emph{ Mem. Amer. Math. Soc.} 127 (1972).\vskip2mm

\bibitem{JOH_weak} {B.E. Johnson}, Weak amenability of group algebras, Bull. London Math. Soc. 23 (1991),
281--284.\vskip2mm


\bibitem{SAM}
E. Samei, {Weak amenability and $2$-weak amenability of Beurling algebras}, J. Math. Anal. Appl. 346 (2008), 451--467.\vskip2mm


\end{thebibliography}
\end{document}